\documentclass[11pt,a4paper]{amsart}
\usepackage[british]{babel}
\usepackage{a4wide}
\usepackage{amsmath}
\usepackage{amssymb}
\usepackage{amsfonts}
\usepackage{amsthm}
\usepackage{color}
\usepackage{array}
\usepackage[font=small, width=0.9\textwidth]{caption}
\usepackage{ifthen}

\theoremstyle{plain}
\newtheorem{theorem}{Theorem}

\newtheorem{lemma}[theorem]{Lemma}

\newtheorem{definition}[theorem]{Definition}

\newcommand{\Z}{\mathbb{Z}}

\theoremstyle{definition}

\newcommand{\unitcube}[1]%
{\begin{picture}(12,12)
\linethickness{0.3pt}
\put(0,0){\line(0,1){3}}
\put(0,9){\line(0,1){3}}
\linethickness{1.0pt}
\put(0,4){\line(0,1){4}}
\ifthenelse{#1=0 \OR #1=1}{\put(-2,6){\line(1,0){4}}}{}
\linethickness{0.3pt}
\put(0,0){\line(1,0){3}}
\put(9,0){\line(1,0){3}}
\linethickness{1.0pt}
\put(4,0){\line(1,0){4}}
\ifthenelse{#1=0 \OR #1=2}{\put(6,-2){\line(0,1){4}}}{}
\end{picture}
}

\newcommand{\fourcube}[4]{%
\begin{picture}(24,24)
\put( 0,12){\unitcube{#1}}
\put(12,12){\unitcube{#2}}
\put( 0, 0){\unitcube{#3}}
\put(12, 0){\unitcube{#4}}
\end{picture}
}

\newcommand{\plusline}{
\begin{picture}(4,12)
\linethickness{0.3pt}
\put(0,0){\line(0,1){3}}
\put(0,9){\line(0,1){3}}
\linethickness{1.0pt}
\put(0,4){\line(0,1){4}}
\put(-2,6){\line(1,0){4}}
\end{picture}
}

\newcommand{\minusline}{
\begin{picture}(4,12)
\linethickness{0.3pt}
\put(0,0){\line(0,1){3}}
\put(0,9){\line(0,1){3}}
\linethickness{1.0pt}
\put(0,4){\line(0,1){4}}
\end{picture}
}

\newcommand{\plus}{
\begin{picture}(4,10)(0,-1)
\linethickness{1.0pt}
\put(2,0){\line(0,1){4}}
\put(0,2){\line(1,0){4}}
\end{picture}
}

\newcommand{\minus}{
\begin{picture}(4,10)(0,-1)
\linethickness{1.0pt}
\put(2,0){\line(0,1){4}}
\end{picture}
}


\allowdisplaybreaks[1]
\begin{document}

\title[Substitution for Higher-Dimensional Paperfolding]
  {Substitution Rules for Higher-Dimensional \\ Paperfolding Structures}

\author{Franz G\"{a}hler}
\author{Johan Nilsson}

\address{Fakult\"{a}t f\"{u}r Mathematik, Universit\"{a}t Bielefeld,\newline
\hspace*{\parindent}Postfach 100131, 33501 Bielefeld, Germany}
\email{$\{$gaehler,jnilsson$\}$@math.uni-bielefeld.de}

\begin{abstract}
We present a general scheme how to construct a substitution rule for
generating $d$-dimensional analogues of the paperfolding structures. 
This substitution is proven to be primitive, so that the translation
action on the hull forms a strictly ergodic dynamical system. 
The substitution admits a coincidence in the sense of Dekking, which 
implies that the dynamical system has pure point spectrum. The same 
then holds true also for the diffraction spectrum. The substitution 
also allows us to give estimates on the complexity of the paperfolding
structures, and to determine topological invariants like the \v{C}ech 
cohomology groups of the hull for dimensions $d\le2$.
\end{abstract}

\subjclass[2010]{52C23, 37B50, 05B45}
\keywords{Aperiodic Tilings, Tiling Dynamics}

\maketitle

\section{Introduction}
The paperfolding sequences are a classical and well known example of
aperiodic sequences studied in mathematics, physics and crystallography.  
Roughly speaking, they are obtained by repeatedly folding an arbitrarily 
long strip of paper in the middle, and then unfold it to obtain a 
sequence of creases, which are of two types -- valleys and crests. 
The one sided paperfolding sequences, \texttt{A014577} in \cite{oeis}, 
are now obtained by reading off the creases starting from the left
of the paper, and the two sided version is obtained by reading off from
the centre, outwards in both directions. The structure and properties 
of the paperfolding sequence have been studied in many articles 
\cite{allouche92,allouche2003,allouche95,davis,dekking2012,dekking82,gardner}
(just to mention a few). In particular, Dekking et al.\ \cite{dekking82}
present a substitution rule on a four letter alphabet for the classical 
1-dimensional paperfolding sequence.

Ben-Abraham et al.\ \cite{benabraham} recently gave a generalisation of
the paperfolding sequence to higher dimensions. Using a recursive procedure, 
they construct paperfolding structures in $d$ dimensions for arbitrary $d$.
Unfortunately, this recursive construction, which we review in Section~2, 
is not a substitution. In this paper, we construct a primitive substitution 
which reproduces the higherdimensional paperfolding structures. The
existence of such a substitution has a number of immediate consequences 
on the properties of the paperfolding patterns, and its knowledge
makes available a wealth of machinery to study further properties.

In particular, having a substitution allows us to study the {\em hull} 
of the pattern, and {\em dynamical systems} given by certain group
actions on the hull.  The hull of a pattern is given by the closure of
the translation orbit of the pattern under a local topology. Under
mild conditions, the hull is a compact space, and the translations 
act continuously on it. For primitive substitution structures, the
translation action on the hull is known to be minimal and uniquely
ergodic, which is related to the fact that primitive substitution
patterns are repetitive and have unique patch frequencies. For
background information on primitive substitution patterns and their
dynamical systems, we refer to the recent monograph \cite{BG}.

As a first step in studying paperfolding structures in $d$ dimensions,
we therefore construct a primitive substitution producing them:

\begin{theorem}
  For any $d\ge1$, there exists a primitive substitution $\mu_d$ which
  produces the paperfolding structures introduced by Ben-Abraham et
  al.\ \cite{benabraham}.
\end{theorem}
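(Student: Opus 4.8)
The plan is to realise the paperfolding structures as decorations of the cubic lattice $\Z^d$ and to exhibit $\mu_d$ as an explicit block substitution with inflation factor $2$ in each coordinate direction, so that a single prototile is replaced by a $2^d$-block of prototiles (the $d=2$ case being the passage from \unitcube{} to \fourcube{}{}{}{}). First I would fix the alphabet: a prototile is a unit cube carrying crease data, recording for each coordinate direction whether the relevant facet bears a crease and, if so, of which of the two types (valley or crest). Since one folding step in the Ben-Abraham recursion doubles the pattern in every direction, the natural candidate for $\mu_d$ sends each decorated cube to the $2^d$ cubes produced by one such fold, with the new crease data prescribed by explicit, purely local rules read off from the geometry of the fold.

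Having written down these rules, the first point to verify is that the alphabet is finite, i.e. that iterating $\mu_d$ from a seed prototile yields only finitely many distinct decorated cubes. This reduces to checking that the crease data admits a bounded description that is preserved under the substitution rules; I expect the prototile set to be small and explicitly enumerable, in $d=2$ consisting precisely of the four decorated tiles drawn above. One then fixes a seed that is invariant under a suitable power of $\mu_d$ and lets its iterates converge to a fixed point defined on all of $\Z^d$.

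The core of the argument is to prove that this fixed point reproduces the recursive paperfolding structure of Section~2. The difficulty is that the recursion is \emph{global}---each step folds the entire pattern---whereas $\mu_d$ acts \emph{locally} and independently on every tile. I would bridge the two by induction on the number of folding steps $n$: assuming that $\mu_d^{\,n}$ applied to the seed agrees with the pattern after $n$ folds on the corresponding block, one shows that one further application of $\mu_d$ matches the effect of one additional fold. The inductive step amounts to checking that the local rules encode exactly the crease a fold introduces together with the inherited creases, and this is the main obstacle, where the precise choice of alphabet and rules must be calibrated so that the local and global descriptions coincide.

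Finally, primitivity. Because the alphabet is finite and explicit, it suffices to exhibit a power $\mu_d^{\,k}$ whose substitution matrix is strictly positive, i.e. such that every prototile occurs in $\mu_d^{\,k}(a)$ for every prototile $a$. Given the way folding mixes the crease types, I expect a small fixed $k$ (independent of the seed, and in low dimensions $k=2$ should already do) to make every type appear in the image of every type, which can be confirmed either by direct combinatorial inspection of $\mu_d^{\,k}$ or by computing the relevant power of the substitution matrix. Primitivity of $\mu_d$, combined with the coincidence established above, then gives the theorem, and by the standard theory recalled in the introduction it also yields minimality and unique ergodicity of the translation action on the hull.
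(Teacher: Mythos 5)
There is a genuine gap, and it is precisely the central idea of the paper. You propose an alphabet consisting of unit cubes decorated only with their crease data --- $2^d$ tiles, ``precisely the four decorated tiles'' in $d=2$ --- but on this alphabet the map you describe is not a substitution at all: the creases inserted into a cube by the next fold are \emph{not} determined by the cube's crease decoration. Already for $d=1$, the tile with a valley crease at its left endpoint must be sent to (valley, crest) when its left endpoint $x_1$ is even and to (valley, valley) when $x_1$ is odd, so the same letter would need two different images; no substitution of length $2$ on the two crease symbols generates the paperfolding sequence. What the paper proves (its Lemmas on folding) is exactly the missing ingredient: when the paper is folded $n$ times, the orientation of a unit cube in the pile depends only on the \emph{parity} of the coordinates of its reference point $\boldsymbol{x}$, and the creases inserted inside it are $\bigl(\prod_{\{i\,:\,x_i\ \mathrm{even}\}} m_i'\bigr)\,S_d(1)$. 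Hence the parity class of the reference point must be carried as part of the letter, giving up to $2^d\cdot 2^d$ tile types ($16$ letters $b_{ij}$ in $d=2$, not $4$); with parity included, each child's parity is determined by its offset inside the doubled block, the rule closes up into a genuine block substitution, and your local-versus-global induction then goes through. Your hedge that the alphabet ``must be calibrated'' gestures at this, but the calibration \emph{is} the main content, and your explicit prediction of the prototile set is wrong.

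Your primitivity argument also does not suffice. First, the guess $k=2$ already fails for $d=1$: $\nu_1^2(a_{01})=a_{00}a_{11}a_{00}a_{01}$ misses $a_{10}$ (the paper notes four iterations are needed from a single letter). Second, and more seriously, the theorem quantifies over all $d\ge1$, so ``direct combinatorial inspection'' or computing a power of the substitution matrix can only handle fixed small $d$; one needs a uniform argument that in \emph{each} of the $2^d$ parity classes all $2^d$ crease configurations occur, i.e.\ that all $4^d$ letters appear. The paper does this by computing the crease signs of $m_{a_1}'\cdots m_{a_r}'S_d(1)$ as $\mathrm{sign}\bigl((-1)^{N(\boldsymbol{a},k)}\prod_{i<k}\sigma^k_i\bigr)$ and showing that distinct strictly increasing reflection sequences yield distinct semi-cubes in the first orthant (so all $2^d$ types occur there), and then by exploiting the reflection-group orbits of the points $(\pm1,\ldots,\pm1)$ and $(\pm2,\ldots,\pm2)$ in $S_d(3)$, plus the observation that points of mixed parity carry axis-permuted copies of reflections of $S_d(1)$, to cover every parity class. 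Without an argument of this kind your proof does not establish primitivity for general $d$. (A small additional confusion: the ``coincidence'' you invoke at the end is Dekking's criterion for pure point spectrum, used for a later theorem; it plays no role in constructing $\mu_d$ or proving its primitivity.)
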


Using this substitution, we then prove that the $d$-dimensional
paperfolding structures all admit a coincidence in the sense of
Dekking \cite{dekking78}, which immediately implies the following:

\begin{theorem}
  For any $d\ge1$, the $d$-dimensional paperfolding structures have
  pure point diffraction spectrum, and the dynamical system of the
  translation action on the hull has pure point dynamical spectrum.
\end{theorem}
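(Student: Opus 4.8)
The plan is to deduce both spectral statements from the primitive substitution $\mu_d$ furnished by Theorem~1, by verifying that $\mu_d$ satisfies Dekking's coincidence condition \cite{dekking78} and then invoking the general equivalence between pure point dynamical spectrum and pure point diffraction spectrum for this class of systems. Since $\mu_d$ is a block substitution of constant inflation factor (each tile being replaced by a fixed $2^d$-block of tiles), it is of constant shape, so that Dekking's theory of constant-length substitutions, in its $\Z^d$-version, applies directly.

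First I would recall Dekking's notion of a \emph{coincidence}: for a primitive substitution of constant shape on a finite alphabet, a coincidence is the existence of an iteration level $n$ and a position $k$ inside the inflated block such that, for \emph{every} pair of letters $a,b$, the tile occupying position $k$ in $\mu_d^{\,n}(a)$ agrees with the one in $\mu_d^{\,n}(b)$. Dekking's theorem states that such a primitive substitution has pure point dynamical spectrum precisely when a coincidence exists. As $\mu_d$ is primitive by Theorem~1, it therefore suffices to exhibit a single coincidence.

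Locating such a coincidence explicitly for the $d$-dimensional alphabet is the combinatorial heart of the argument, and the step I expect to be the main obstacle. Here I would exploit the recursive and essentially product-like structure of the Ben-Abraham et al.\ construction \cite{benabraham}: the folding in distinct coordinate directions is governed by copies of the one-dimensional folding rule, for which the coincidence is classical on the four-letter alphabet of Dekking--Mend\`es-France--van der Poorten \cite{dekking82}. The goal would be to show that iterating $\mu_d$ a bounded number of times forces all tile types to agree at one distinguished position of the inflated block, with the coincidence inherited coordinate-wise from the one-dimensional case. Establishing this uniformly in $d$, given that both the alphabet and the block grow with the dimension, is the delicate point; I would expect an inductive argument on $d$, reducing the $d$-dimensional coincidence to coincidences along each coordinate axis.

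With a coincidence in hand, Dekking's theorem yields pure point dynamical spectrum for the translation action on the hull. To pass to diffraction, I would invoke the equivalence between pure point dynamical and pure point diffraction spectrum valid for such systems. By primitivity the hull is strictly ergodic, so the paperfolding structure defines a uniquely ergodic measure dynamical system whose autocorrelation and diffraction are well defined; for such systems pure point dynamical spectrum is equivalent to pure point diffraction spectrum (see \cite{BG} and the references therein). This immediately gives the second assertion and completes the proof.
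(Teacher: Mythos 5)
Your overall frame is the paper's: primitivity plus a Dekking coincidence, then the standard equivalence of pure point dynamical and diffraction spectrum for uniquely ergodic systems, with \cite{frank} supplying the $\Z^d$ version of \cite{dekking78}. But the coincidence itself---which you correctly identify as the heart of the matter---is exactly what you leave unproved, and the route you sketch is doubtful. The $d$-dimensional paperfolding structure is \emph{not} a coordinate-wise product of one-dimensional paperfolding sequences: the crease signs of $S_d(1)$ are $c(\boldsymbol{\sigma}^k)=\mathrm{sign}\bigl(\prod_{i<k}\sigma^k_i\bigr)$, which couples all directions $i<k$, and neither the alphabet of decorated semi-cubes nor the block substitution factorises into copies of the four-letter substitution of Dekking--Mend\`es-France--van der Poorten. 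So ``coincidence inherited coordinate-wise'' and the induction on $d$ have no evident implementation; as written, the proposal is a plan around a gap rather than a proof.

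What you are missing is that the very definition of $\mu_d$ makes the coincidence a short parity computation, uniform in $d$ and requiring no induction. By (\ref{eq: gen sub}), the creases inserted in the interior of a substituted semi-cube depend \emph{only} on the parity class of its reference point, not on its crease decoration. Substitute an arbitrary letter once: since reference points are doubled, the sub-tile at relative position $(1,\ldots,1)$ of the image block has all coordinates odd, whatever the starting letter was (this is visible in $\nu_2$, where every image block has a letter $b_{i3}$ in its upper right corner). Substitute a second time: the tile at relative position $(3,\ldots,3)$ lies in the interior corner of the image of that all-odd tile, so all $d$ faces containing its reference point carry newly inserted creases determined by the all-odd parity alone, and its own parity index is again all odd. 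Hence $\mu_d^2(\alpha)$ carries the same letter at position $(3,\ldots,3)$ for every letter $\alpha$---a coincidence with $k=2$ and $(t_1,\ldots,t_d)=(3,\ldots,3)$. One further small correction: Dekking's theorem is an equivalence only under an additional height assumption (with analogous caveats in the $\Z^d$ setting of \cite{frank}); for the present theorem you only need, and should only claim, the implication that a coincidence forces pure point spectrum.
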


The existence of a primitive substitution also gives immediate bounds
on the complexity the $d$-dimensional paperfolding structures:

\begin{theorem}
  The number of distinct cubic subpatterns of linear size $n$ 
  in a $d$-dimensional paperfolding structure grows at most as
  $\text{const}\cdot n^d$.
\end{theorem}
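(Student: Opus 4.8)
The plan is to exploit the self-similar structure provided by the primitive substitution $\mu_d$ of Theorem~1. Write $\lambda$ for the linear inflation factor of $\mu_d$; since the paperfolding construction folds in half, we have $\lambda=2$, and $\mu_d$ sends each decorated unit cube to a $\lambda\times\cdots\times\lambda$ block of decorated unit cubes. Consequently, a fixed point of $\mu_d$ is, for every level $k\ge0$, partitioned into \emph{level-$k$ supertiles}: axis-aligned cubes of side length $\lambda^k$, each of which is a translate of $\mu_d^k(a)$ for one of the finitely many letters $a$ of the alphabet. These supertiles sit on a regular grid of spacing $\lambda^k$, so I may regard the structure as tiled without gaps or overlaps by such cubes.

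Given a target size $n$, I would choose the smallest $k$ with $\lambda^k\ge n$, so that $\lambda^k<\lambda\,n=O(n)$. Now let $P$ be any cubic subpattern of linear size $n$, occurring with its lower corner at some position in the structure. Because $P$ has side $n\le\lambda^k$ while the level-$k$ supertiles form a grid of spacing $\lambda^k$, the cube occupied by $P$ meets at most two supertiles in each coordinate direction, hence at most $2^d$ supertiles in total. Since the supertiles tile space, the content of $P$ is completely determined by two pieces of data: first, the (at most $2^d$) \emph{types} of the supertiles it meets, of which there are at most $m^{2^d}$ combinations, where $m$ is the alphabet size -- a constant independent of $n$; and second, the \emph{offset} of the lower corner of $P$ relative to the supertile grid, which in each direction takes at most $\lambda^k$ values, giving at most $(\lambda^k)^d$ possibilities.

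Multiplying these bounds, the number of distinct cubic subpatterns of linear size $n$ is at most $m^{2^d}\,(\lambda^k)^d \le m^{2^d}\,(\lambda\,n)^d = \text{const}\cdot n^d$, which is the asserted estimate. The routine remaining points are that a fixed point really does decompose into supertiles on a regular grid at every level (immediate from $\mu_d$ being a cube substitution with uniform inflation) and that distinct letters yield finitely many supertile types (immediate from finiteness of the alphabet). The only genuinely substantive step is the passage from an arbitrary patch of the structure to the covering supertiles, i.e.\ knowing that every such patch is seen inside the substitutive hierarchy; this is where primitivity of $\mu_d$ enters, guaranteeing that the structure is indeed a supertile-hierarchical fixed point and that the finitely many level-$k$ supertile types exhaust all possibilities, so that the constant $m^{2^d}$ is uniform over the whole hull.
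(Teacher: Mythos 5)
Your proposal is correct and follows essentially the same route as the paper: choose the least $k$ with $2^k\ge n$, observe that any cubic $n$-patch is covered by a $2\times\cdots\times2$ block of level-$k$ supertiles (super-semi-cubes of order $k$), and multiply the constant number of possible such blocks by the at most $(2^k)^d\le(2n)^d$ offsets of the patch's lower corner within the supertile grid. The only difference is cosmetic: the paper bounds the number of $2^d$-blocks by the number $c$ of blocks actually occurring, constant in $k$ by self-similarity, whereas you use the cruder but equally constant bound $m^{2^d}$ over all conceivable type combinations (and your closing appeal to primitivity is not actually needed for the count --- local indistinguishability of hull elements from the substitution-generated structure suffices).
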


By means of the Anderson-Putnam \cite{AP} method, a primitive substitution 
also allows to compute topological invariants of the hull, such as \v{C}ech
cohomology groups. We have determined these cohomology groups for
dimensions $1$ and $2$:

\begin{theorem}
  The hull of the classical 1-dimensional paperfolding structures has
  \v{C}ech cohomology groups
  \[ \check{H}^0=\Z,\quad\check{H}^1=\Z[\textstyle{\frac12}]\oplus\Z. \]
  The hull of the 2-dimensional generalised paperfolding structures has
  \v{C}ech cohomology groups
  \[ \check{H}^0=\Z,\quad\check{H}^1=\Z[\textstyle{\frac12}]^2,\quad
     \check{H}^2=\Z[\textstyle{\frac14}]\oplus
                 \Z[\textstyle{\frac12}]^2\oplus\Z^3\oplus\Z_2.
  \]
\end{theorem}

The outline of the paper is as follows. In Section~\ref{sec:recursion},
we review the recursive construction of Ben-Abraham et \mbox{al.}  
\cite{benabraham}, and also fix the notation, which is mostly adopted
from \cite{benabraham}. In Section~\ref{sec:subst} we then present
the general construction of the paperfolding substitution $\mu_d$ in
$d$ dimensions. Before we proceed to analysing the properties of these
substitutions, we illustrate them with pictures in dimensions $1$ and $2$
(Section~\ref{sec:examples}). In Section~\ref{sec:properties} we then
prove the results announced above.

\section{Recursion}
\label{sec:recursion}

Here, we review the general recursion for the $d$-dimensional
paperfolding structures, as outlined by Ben-Abraham et \mbox{al.}
\cite{benabraham}. We also adopt most of their notation.

Each paperfolding structure is obtained by a sequence of 1-dimensional
folds, each in a hyperplane perpendicular to one of the main
coordinate axes. We apply a fold in each of the $d$ directions, before
we fold in the same direction again, so that we can combine $d$
1-dimensional folds into what we call a $d$-fold.  By this we mean a
sequence of $d$ 1-dimensional folds of a quadratic (or cubic,
hypercubic) ``paper'', which are edge-to-edge, from the negative to the 
positive $x_i$-axis, in the order $i=1,2,\ldots, d$. In each fold,
the paper is bent ``upwards'', producing a valley crease in the
previously unfolded paper stack. Let $S_d(n)$ be the $d$-dimensional 
paperfolding structure after $n$ $d$-folds. That is, $S_d(n)$ can be 
seen as a cubic $d$-dimensional paper that has been folded $n$ times
in each direction, and then was unfolded again. We let the origin be 
in the centre of the paper. The resulting crease pattern, with valleys 
and crests, is what we call the paperfolding structure.

The 1-dimensional paperfolding sequence is a sequence defined on the
alphabet $A=\{+,-\}$ ($+$ for valley and $-$ for crest) by the
recursion
\[	S_1(n+1) = m'S_1(n)\ + \ S_1(n),
\]
with the initial condition $S_1(0)=\emptyset$, and where $m'$ means
the reflected sequence, with valleys and crests exchanged. By 
\emph{reflection} we shall from here on mean the reverse of the 
sequence of creases, followed by the swapping their signs.

In the 2-dimensional case the recursion becomes
\begin{center}
\begin{picture}(156,96)(-60,0)
\put(-60,45){{\small$S_2(n+1) = $}}
\put( 50,15){\makebox[48pt][c]{{\small$m_2' S_2(n)$}}}
\put( 50,75){\makebox[48pt][c]{{\small$S_2(n)$}}}
\put(-15,15){\makebox[52pt][c]{{\small$m'_1 m_2' S_2(n)$}}}
\put(-15,75){\makebox[52pt][c]{{\small$m_1' S_2(n)$}}}
\linethickness{0.3pt}\put( 0,48){\line(1,0){3}}
\linethickness{1.0pt}\put( 4,48){\line(1,0){4}}
\linethickness{0.3pt}\put( 9,48){\line(1,0){3}}
\multiput(12,48)(2,0){6}{\put( 0,0){\line(1,0){1}}}
\multiput(24,48)(12,0){4}
{\linethickness{0.3pt}\put( 0,0){\line(1,0){3}}
 \linethickness{1.0pt}\put( 4,0){\line(1,0){4}}
 \linethickness{0.3pt}\put( 9,0){\line(1,0){3}}
}
\multiput(72,48)(2,0){6}{\put( 0,0){\line(1,0){1}}}
\linethickness{1.0pt}
\put(54,46){\line(0,1){4}}
\put(66,46){\line(0,1){4}}
\put(90,46){\line(0,1){4}}
\linethickness{0.3pt}\put(84,48){\line(1,0){3}}
\linethickness{1.0pt}\put(88,48){\line(1,0){4}}
\linethickness{0.3pt}\put(93,48){\line(1,0){3}}
\linethickness{0.3pt}\put(48, 0){\line(0,1){3}}
\linethickness{1.0pt}\put(48, 4){\line(0,1){4}}\put(46,6){\line(1,0){4}}
\linethickness{0.3pt}\put(48, 9){\line(0,1){3}}
\multiput(48,12)(0,2){6}{\put( 0,0){\line(0,1){1}}}
\multiput(48,24)(0,12){4}
{\linethickness{0.3pt}\put( 0,0){\line(0,1){3}}
 \linethickness{1.0pt}\put( 0,4){\line(0,1){4}}\put(-2,6){\line(1,0){4}} 
 \linethickness{0.3pt}\put( 0,9){\line(0,1){3}}
}
\multiput(48,72)(0,2){6}{\put( 0,0){\line(0,1){1}}}
\linethickness{0.3pt}\put(48,84){\line(0,1){3}}
\linethickness{1.0pt}\put(48,88){\line(0,1){4}} \put(46,90){\line(1,0){4}}
\linethickness{0.3pt}\put(48,93){\line(0,1){3}}
\end{picture}
\end{center}
with the initial condition $S_2(0) = \emptyset$ (the unfolded paper).
Here we use the notation $m'_a$ for the reflection in $x_a$-axis
direction. See Figure \ref{fig: S1S2S3} for a visualisation of $S_2$
after the first folds. Note that reflecting twice along different
axes, $m'_a m'_b$, results in just a rotation by $\pi$.
\begin{center}
\begin{figure}[th]
\begin{center}
\begin{tabular}{ccc}
\begin{picture}(96,96)
\linethickness{0.3pt}
\put(48,48){\line(0,1){21}}
\put(48,75){\line(0,1){21}}
\linethickness{1.0pt}
\put(48,70){\line(0,1){4}}
\put(46,72){\line(1,0){4}}
\linethickness{0.3pt}
\put(0,48){\line(1,0){21}}
\put(27,48){\line(1,0){21}}
\linethickness{1.0pt}
\put(22,48){\line(1,0){4}}
\linethickness{0.3pt}
\put(48,48){\line(1,0){21}}
\put(75,48){\line(1,0){21}}
\linethickness{1.0pt}
\put(70,48){\line(1,0){4}}
\put(72,46){\line(0,1){4}}
\linethickness{0.3pt}
\put(48,0){\line(0,1){21}}
\put(48,27){\line(0,1){21}}
\linethickness{1.0pt}
\put(48,22){\line(0,1){4}}
\put(46,24){\line(1,0){4}}
\end{picture}
& 
\begin{picture}(96,96)
\linethickness{0.3pt}
\put(24,72){\line(0,1){9}}
\put(24,87){\line(0,1){9}}
\linethickness{1.0pt}
\put(24,82){\line(0,1){4}}
\linethickness{0.3pt}
\put(48,72){\line(0,1){9}}
\put(48,87){\line(0,1){9}}
\linethickness{1.0pt}
\put(48,82){\line(0,1){4}}
\put(46,84){\line(1,0){4}}
\linethickness{0.3pt}
\put(72,72){\line(0,1){9}}
\put(72,87){\line(0,1){9}}
\linethickness{1.0pt}
\put(72,82){\line(0,1){4}}
\put(70,84){\line(1,0){4}}
\linethickness{0.3pt}
\put(0,72){\line(1,0){9}}
\put(15,72){\line(1,0){9}}
\linethickness{1.0pt}
\put(10,72){\line(1,0){4}}
\linethickness{0.3pt}
\put(24,72){\line(1,0){9}}
\put(39,72){\line(1,0){9}}
\linethickness{1.0pt}
\put(34,72){\line(1,0){4}}
\put(36,70){\line(0,1){4}}
\linethickness{0.3pt}
\put(48,72){\line(1,0){9}}
\put(63,72){\line(1,0){9}}
\linethickness{1.0pt}
\put(58,72){\line(1,0){4}}
\linethickness{0.3pt}
\put(72,72){\line(1,0){9}}
\put(87,72){\line(1,0){9}}
\linethickness{1.0pt}
\put(82,72){\line(1,0){4}}
\put(84,70){\line(0,1){4}}
\linethickness{0.3pt}
\put(24,48){\line(0,1){9}}
\put(24,63){\line(0,1){9}}
\linethickness{1.0pt}
\put(24,58){\line(0,1){4}}
\linethickness{0.3pt}
\put(48,48){\line(0,1){9}}
\put(48,63){\line(0,1){9}}
\linethickness{1.0pt}
\put(48,58){\line(0,1){4}}
\put(46,60){\line(1,0){4}}
\linethickness{0.3pt}
\put(72,48){\line(0,1){9}}
\put(72,63){\line(0,1){9}}
\linethickness{1.0pt}
\put(72,58){\line(0,1){4}}
\put(70,60){\line(1,0){4}}
\linethickness{0.3pt}
\put(0,48){\line(1,0){9}}
\put(15,48){\line(1,0){9}}
\linethickness{1.0pt}
\put(10,48){\line(1,0){4}}
\linethickness{0.3pt}
\put(24,48){\line(1,0){9}}
\put(39,48){\line(1,0){9}}
\linethickness{1.0pt}
\put(34,48){\line(1,0){4}}
\linethickness{0.3pt}
\put(48,48){\line(1,0){9}}
\put(63,48){\line(1,0){9}}
\linethickness{1.0pt}
\put(58,48){\line(1,0){4}}
\put(60,46){\line(0,1){4}}
\linethickness{0.3pt}
\put(72,48){\line(1,0){9}}
\put(87,48){\line(1,0){9}}
\linethickness{1.0pt}
\put(82,48){\line(1,0){4}}
\put(84,46){\line(0,1){4}}
\linethickness{0.3pt}
\put(24,24){\line(0,1){9}}
\put(24,39){\line(0,1){9}}
\linethickness{1.0pt}
\put(24,34){\line(0,1){4}}
\put(22,36){\line(1,0){4}}
\linethickness{0.3pt}
\put(48,24){\line(0,1){9}}
\put(48,39){\line(0,1){9}}
\linethickness{1.0pt}
\put(48,34){\line(0,1){4}}
\put(46,36){\line(1,0){4}}
\linethickness{0.3pt}
\put(72,24){\line(0,1){9}}
\put(72,39){\line(0,1){9}}
\linethickness{1.0pt}
\put(72,34){\line(0,1){4}}
\linethickness{0.3pt}
\put(0,24){\line(1,0){9}}
\put(15,24){\line(1,0){9}}
\linethickness{1.0pt}
\put(10,24){\line(1,0){4}}
\put(12,22){\line(0,1){4}}
\linethickness{0.3pt}
\put(24,24){\line(1,0){9}}
\put(39,24){\line(1,0){9}}
\linethickness{1.0pt}
\put(34,24){\line(1,0){4}}
\linethickness{0.3pt}
\put(48,24){\line(1,0){9}}
\put(63,24){\line(1,0){9}}
\linethickness{1.0pt}
\put(58,24){\line(1,0){4}}
\put(60,22){\line(0,1){4}}
\linethickness{0.3pt}
\put(72,24){\line(1,0){9}}
\put(87,24){\line(1,0){9}}
\linethickness{1.0pt}
\put(82,24){\line(1,0){4}}
\linethickness{0.3pt}
\put(24,0){\line(0,1){9}}
\put(24,15){\line(0,1){9}}
\linethickness{1.0pt}
\put(24,10){\line(0,1){4}}
\put(22,12){\line(1,0){4}}
\linethickness{0.3pt}
\put(48,0){\line(0,1){9}}
\put(48,15){\line(0,1){9}}
\linethickness{1.0pt}
\put(48,10){\line(0,1){4}}
\put(46,12){\line(1,0){4}}
\linethickness{0.3pt}
\put(72,0){\line(0,1){9}}
\put(72,15){\line(0,1){9}}
\linethickness{1.0pt}
\put(72,10){\line(0,1){4}}
\end{picture}
& 
\begin{picture}(96,96)
\linethickness{0.3pt}
\put(12,84){\line(0,1){3}}
\put(12,93){\line(0,1){3}}
\linethickness{1.0pt}
\put(12,88){\line(0,1){4}}
\linethickness{0.3pt}
\put(24,84){\line(0,1){3}}
\put(24,93){\line(0,1){3}}
\linethickness{1.0pt}
\put(24,88){\line(0,1){4}}
\linethickness{0.3pt}
\put(36,84){\line(0,1){3}}
\put(36,93){\line(0,1){3}}
\linethickness{1.0pt}
\put(36,88){\line(0,1){4}}
\put(34,90){\line(1,0){4}}
\linethickness{0.3pt}
\put(48,84){\line(0,1){3}}
\put(48,93){\line(0,1){3}}
\linethickness{1.0pt}
\put(48,88){\line(0,1){4}}
\put(46,90){\line(1,0){4}}
\linethickness{0.3pt}
\put(60,84){\line(0,1){3}}
\put(60,93){\line(0,1){3}}
\linethickness{1.0pt}
\put(60,88){\line(0,1){4}}
\linethickness{0.3pt}
\put(72,84){\line(0,1){3}}
\put(72,93){\line(0,1){3}}
\linethickness{1.0pt}
\put(72,88){\line(0,1){4}}
\put(70,90){\line(1,0){4}}
\linethickness{0.3pt}
\put(84,84){\line(0,1){3}}
\put(84,93){\line(0,1){3}}
\linethickness{1.0pt}
\put(84,88){\line(0,1){4}}
\put(82,90){\line(1,0){4}}
\linethickness{0.3pt}
\put(0,84){\line(1,0){3}}
\put(9,84){\line(1,0){3}}
\linethickness{1.0pt}
\put(4,84){\line(1,0){4}}
\linethickness{0.3pt}
\put(12,84){\line(1,0){3}}
\put(21,84){\line(1,0){3}}
\linethickness{1.0pt}
\put(16,84){\line(1,0){4}}
\put(18,82){\line(0,1){4}}
\linethickness{0.3pt}
\put(24,84){\line(1,0){3}}
\put(33,84){\line(1,0){3}}
\linethickness{1.0pt}
\put(28,84){\line(1,0){4}}
\linethickness{0.3pt}
\put(36,84){\line(1,0){3}}
\put(45,84){\line(1,0){3}}
\linethickness{1.0pt}
\put(40,84){\line(1,0){4}}
\put(42,82){\line(0,1){4}}
\linethickness{0.3pt}
\put(48,84){\line(1,0){3}}
\put(57,84){\line(1,0){3}}
\linethickness{1.0pt}
\put(52,84){\line(1,0){4}}
\linethickness{0.3pt}
\put(60,84){\line(1,0){3}}
\put(69,84){\line(1,0){3}}
\linethickness{1.0pt}
\put(64,84){\line(1,0){4}}
\put(66,82){\line(0,1){4}}
\linethickness{0.3pt}
\put(72,84){\line(1,0){3}}
\put(81,84){\line(1,0){3}}
\linethickness{1.0pt}
\put(76,84){\line(1,0){4}}
\linethickness{0.3pt}
\put(84,84){\line(1,0){3}}
\put(93,84){\line(1,0){3}}
\linethickness{1.0pt}
\put(88,84){\line(1,0){4}}
\put(90,82){\line(0,1){4}}
\linethickness{0.3pt}
\put(12,72){\line(0,1){3}}
\put(12,81){\line(0,1){3}}
\linethickness{1.0pt}
\put(12,76){\line(0,1){4}}
\linethickness{0.3pt}
\put(24,72){\line(0,1){3}}
\put(24,81){\line(0,1){3}}
\linethickness{1.0pt}
\put(24,76){\line(0,1){4}}
\linethickness{0.3pt}
\put(36,72){\line(0,1){3}}
\put(36,81){\line(0,1){3}}
\linethickness{1.0pt}
\put(36,76){\line(0,1){4}}
\put(34,78){\line(1,0){4}}
\linethickness{0.3pt}
\put(48,72){\line(0,1){3}}
\put(48,81){\line(0,1){3}}
\linethickness{1.0pt}
\put(48,76){\line(0,1){4}}
\put(46,78){\line(1,0){4}}
\linethickness{0.3pt}
\put(60,72){\line(0,1){3}}
\put(60,81){\line(0,1){3}}
\linethickness{1.0pt}
\put(60,76){\line(0,1){4}}
\linethickness{0.3pt}
\put(72,72){\line(0,1){3}}
\put(72,81){\line(0,1){3}}
\linethickness{1.0pt}
\put(72,76){\line(0,1){4}}
\put(70,78){\line(1,0){4}}
\linethickness{0.3pt}
\put(84,72){\line(0,1){3}}
\put(84,81){\line(0,1){3}}
\linethickness{1.0pt}
\put(84,76){\line(0,1){4}}
\put(82,78){\line(1,0){4}}
\linethickness{0.3pt}
\put(0,72){\line(1,0){3}}
\put(9,72){\line(1,0){3}}
\linethickness{1.0pt}
\put(4,72){\line(1,0){4}}
\linethickness{0.3pt}
\put(12,72){\line(1,0){3}}
\put(21,72){\line(1,0){3}}
\linethickness{1.0pt}
\put(16,72){\line(1,0){4}}
\linethickness{0.3pt}
\put(24,72){\line(1,0){3}}
\put(33,72){\line(1,0){3}}
\linethickness{1.0pt}
\put(28,72){\line(1,0){4}}
\put(30,70){\line(0,1){4}}
\linethickness{0.3pt}
\put(36,72){\line(1,0){3}}
\put(45,72){\line(1,0){3}}
\linethickness{1.0pt}
\put(40,72){\line(1,0){4}}
\put(42,70){\line(0,1){4}}
\linethickness{0.3pt}
\put(48,72){\line(1,0){3}}
\put(57,72){\line(1,0){3}}
\linethickness{1.0pt}
\put(52,72){\line(1,0){4}}
\linethickness{0.3pt}
\put(60,72){\line(1,0){3}}
\put(69,72){\line(1,0){3}}
\linethickness{1.0pt}
\put(64,72){\line(1,0){4}}
\linethickness{0.3pt}
\put(72,72){\line(1,0){3}}
\put(81,72){\line(1,0){3}}
\linethickness{1.0pt}
\put(76,72){\line(1,0){4}}
\put(78,70){\line(0,1){4}}
\linethickness{0.3pt}
\put(84,72){\line(1,0){3}}
\put(93,72){\line(1,0){3}}
\linethickness{1.0pt}
\put(88,72){\line(1,0){4}}
\put(90,70){\line(0,1){4}}
\linethickness{0.3pt}
\put(12,60){\line(0,1){3}}
\put(12,69){\line(0,1){3}}
\linethickness{1.0pt}
\put(12,64){\line(0,1){4}}
\put(10,66){\line(1,0){4}}
\linethickness{0.3pt}
\put(24,60){\line(0,1){3}}
\put(24,69){\line(0,1){3}}
\linethickness{1.0pt}
\put(24,64){\line(0,1){4}}
\linethickness{0.3pt}
\put(36,60){\line(0,1){3}}
\put(36,69){\line(0,1){3}}
\linethickness{1.0pt}
\put(36,64){\line(0,1){4}}
\linethickness{0.3pt}
\put(48,60){\line(0,1){3}}
\put(48,69){\line(0,1){3}}
\linethickness{1.0pt}
\put(48,64){\line(0,1){4}}
\put(46,66){\line(1,0){4}}
\linethickness{0.3pt}
\put(60,60){\line(0,1){3}}
\put(60,69){\line(0,1){3}}
\linethickness{1.0pt}
\put(60,64){\line(0,1){4}}
\put(58,66){\line(1,0){4}}
\linethickness{0.3pt}
\put(72,60){\line(0,1){3}}
\put(72,69){\line(0,1){3}}
\linethickness{1.0pt}
\put(72,64){\line(0,1){4}}
\put(70,66){\line(1,0){4}}
\linethickness{0.3pt}
\put(84,60){\line(0,1){3}}
\put(84,69){\line(0,1){3}}
\linethickness{1.0pt}
\put(84,64){\line(0,1){4}}
\linethickness{0.3pt}
\put(0,60){\line(1,0){3}}
\put(9,60){\line(1,0){3}}
\linethickness{1.0pt}
\put(4,60){\line(1,0){4}}
\put(6,58){\line(0,1){4}}
\linethickness{0.3pt}
\put(12,60){\line(1,0){3}}
\put(21,60){\line(1,0){3}}
\linethickness{1.0pt}
\put(16,60){\line(1,0){4}}
\linethickness{0.3pt}
\put(24,60){\line(1,0){3}}
\put(33,60){\line(1,0){3}}
\linethickness{1.0pt}
\put(28,60){\line(1,0){4}}
\put(30,58){\line(0,1){4}}
\linethickness{0.3pt}
\put(36,60){\line(1,0){3}}
\put(45,60){\line(1,0){3}}
\linethickness{1.0pt}
\put(40,60){\line(1,0){4}}
\linethickness{0.3pt}
\put(48,60){\line(1,0){3}}
\put(57,60){\line(1,0){3}}
\linethickness{1.0pt}
\put(52,60){\line(1,0){4}}
\put(54,58){\line(0,1){4}}
\linethickness{0.3pt}
\put(60,60){\line(1,0){3}}
\put(69,60){\line(1,0){3}}
\linethickness{1.0pt}
\put(64,60){\line(1,0){4}}
\linethickness{0.3pt}
\put(72,60){\line(1,0){3}}
\put(81,60){\line(1,0){3}}
\linethickness{1.0pt}
\put(76,60){\line(1,0){4}}
\put(78,58){\line(0,1){4}}
\linethickness{0.3pt}
\put(84,60){\line(1,0){3}}
\put(93,60){\line(1,0){3}}
\linethickness{1.0pt}
\put(88,60){\line(1,0){4}}
\linethickness{0.3pt}
\put(12,48){\line(0,1){3}}
\put(12,57){\line(0,1){3}}
\linethickness{1.0pt}
\put(12,52){\line(0,1){4}}
\put(10,54){\line(1,0){4}}
\linethickness{0.3pt}
\put(24,48){\line(0,1){3}}
\put(24,57){\line(0,1){3}}
\linethickness{1.0pt}
\put(24,52){\line(0,1){4}}
\linethickness{0.3pt}
\put(36,48){\line(0,1){3}}
\put(36,57){\line(0,1){3}}
\linethickness{1.0pt}
\put(36,52){\line(0,1){4}}
\linethickness{0.3pt}
\put(48,48){\line(0,1){3}}
\put(48,57){\line(0,1){3}}
\linethickness{1.0pt}
\put(48,52){\line(0,1){4}}
\put(46,54){\line(1,0){4}}
\linethickness{0.3pt}
\put(60,48){\line(0,1){3}}
\put(60,57){\line(0,1){3}}
\linethickness{1.0pt}
\put(60,52){\line(0,1){4}}
\put(58,54){\line(1,0){4}}
\linethickness{0.3pt}
\put(72,48){\line(0,1){3}}
\put(72,57){\line(0,1){3}}
\linethickness{1.0pt}
\put(72,52){\line(0,1){4}}
\put(70,54){\line(1,0){4}}
\linethickness{0.3pt}
\put(84,48){\line(0,1){3}}
\put(84,57){\line(0,1){3}}
\linethickness{1.0pt}
\put(84,52){\line(0,1){4}}
\linethickness{0.3pt}
\put(0,48){\line(1,0){3}}
\put(9,48){\line(1,0){3}}
\linethickness{1.0pt}
\put(4,48){\line(1,0){4}}
\linethickness{0.3pt}
\put(12,48){\line(1,0){3}}
\put(21,48){\line(1,0){3}}
\linethickness{1.0pt}
\put(16,48){\line(1,0){4}}
\linethickness{0.3pt}
\put(24,48){\line(1,0){3}}
\put(33,48){\line(1,0){3}}
\linethickness{1.0pt}
\put(28,48){\line(1,0){4}}
\linethickness{0.3pt}
\put(36,48){\line(1,0){3}}
\put(45,48){\line(1,0){3}}
\linethickness{1.0pt}
\put(40,48){\line(1,0){4}}
\linethickness{0.3pt}
\put(48,48){\line(1,0){3}}
\put(57,48){\line(1,0){3}}
\linethickness{1.0pt}
\put(52,48){\line(1,0){4}}
\put(54,46){\line(0,1){4}}
\linethickness{0.3pt}
\put(60,48){\line(1,0){3}}
\put(69,48){\line(1,0){3}}
\linethickness{1.0pt}
\put(64,48){\line(1,0){4}}
\put(66,46){\line(0,1){4}}
\linethickness{0.3pt}
\put(72,48){\line(1,0){3}}
\put(81,48){\line(1,0){3}}
\linethickness{1.0pt}
\put(76,48){\line(1,0){4}}
\put(78,46){\line(0,1){4}}
\linethickness{0.3pt}
\put(84,48){\line(1,0){3}}
\put(93,48){\line(1,0){3}}
\linethickness{1.0pt}
\put(88,48){\line(1,0){4}}
\put(90,46){\line(0,1){4}}
\linethickness{0.3pt}
\put(12,36){\line(0,1){3}}
\put(12,45){\line(0,1){3}}
\linethickness{1.0pt}
\put(12,40){\line(0,1){4}}
\linethickness{0.3pt}
\put(24,36){\line(0,1){3}}
\put(24,45){\line(0,1){3}}
\linethickness{1.0pt}
\put(24,40){\line(0,1){4}}
\put(22,42){\line(1,0){4}}
\linethickness{0.3pt}
\put(36,36){\line(0,1){3}}
\put(36,45){\line(0,1){3}}
\linethickness{1.0pt}
\put(36,40){\line(0,1){4}}
\put(34,42){\line(1,0){4}}
\linethickness{0.3pt}
\put(48,36){\line(0,1){3}}
\put(48,45){\line(0,1){3}}
\linethickness{1.0pt}
\put(48,40){\line(0,1){4}}
\put(46,42){\line(1,0){4}}
\linethickness{0.3pt}
\put(60,36){\line(0,1){3}}
\put(60,45){\line(0,1){3}}
\linethickness{1.0pt}
\put(60,40){\line(0,1){4}}
\linethickness{0.3pt}
\put(72,36){\line(0,1){3}}
\put(72,45){\line(0,1){3}}
\linethickness{1.0pt}
\put(72,40){\line(0,1){4}}
\linethickness{0.3pt}
\put(84,36){\line(0,1){3}}
\put(84,45){\line(0,1){3}}
\linethickness{1.0pt}
\put(84,40){\line(0,1){4}}
\put(82,42){\line(1,0){4}}
\linethickness{0.3pt}
\put(0,36){\line(1,0){3}}
\put(9,36){\line(1,0){3}}
\linethickness{1.0pt}
\put(4,36){\line(1,0){4}}
\linethickness{0.3pt}
\put(12,36){\line(1,0){3}}
\put(21,36){\line(1,0){3}}
\linethickness{1.0pt}
\put(16,36){\line(1,0){4}}
\put(18,34){\line(0,1){4}}
\linethickness{0.3pt}
\put(24,36){\line(1,0){3}}
\put(33,36){\line(1,0){3}}
\linethickness{1.0pt}
\put(28,36){\line(1,0){4}}
\linethickness{0.3pt}
\put(36,36){\line(1,0){3}}
\put(45,36){\line(1,0){3}}
\linethickness{1.0pt}
\put(40,36){\line(1,0){4}}
\put(42,34){\line(0,1){4}}
\linethickness{0.3pt}
\put(48,36){\line(1,0){3}}
\put(57,36){\line(1,0){3}}
\linethickness{1.0pt}
\put(52,36){\line(1,0){4}}
\linethickness{0.3pt}
\put(60,36){\line(1,0){3}}
\put(69,36){\line(1,0){3}}
\linethickness{1.0pt}
\put(64,36){\line(1,0){4}}
\put(66,34){\line(0,1){4}}
\linethickness{0.3pt}
\put(72,36){\line(1,0){3}}
\put(81,36){\line(1,0){3}}
\linethickness{1.0pt}
\put(76,36){\line(1,0){4}}
\linethickness{0.3pt}
\put(84,36){\line(1,0){3}}
\put(93,36){\line(1,0){3}}
\linethickness{1.0pt}
\put(88,36){\line(1,0){4}}
\put(90,34){\line(0,1){4}}
\linethickness{0.3pt}
\put(12,24){\line(0,1){3}}
\put(12,33){\line(0,1){3}}
\linethickness{1.0pt}
\put(12,28){\line(0,1){4}}
\linethickness{0.3pt}
\put(24,24){\line(0,1){3}}
\put(24,33){\line(0,1){3}}
\linethickness{1.0pt}
\put(24,28){\line(0,1){4}}
\put(22,30){\line(1,0){4}}
\linethickness{0.3pt}
\put(36,24){\line(0,1){3}}
\put(36,33){\line(0,1){3}}
\linethickness{1.0pt}
\put(36,28){\line(0,1){4}}
\put(34,30){\line(1,0){4}}
\linethickness{0.3pt}
\put(48,24){\line(0,1){3}}
\put(48,33){\line(0,1){3}}
\linethickness{1.0pt}
\put(48,28){\line(0,1){4}}
\put(46,30){\line(1,0){4}}
\linethickness{0.3pt}
\put(60,24){\line(0,1){3}}
\put(60,33){\line(0,1){3}}
\linethickness{1.0pt}
\put(60,28){\line(0,1){4}}
\linethickness{0.3pt}
\put(72,24){\line(0,1){3}}
\put(72,33){\line(0,1){3}}
\linethickness{1.0pt}
\put(72,28){\line(0,1){4}}
\linethickness{0.3pt}
\put(84,24){\line(0,1){3}}
\put(84,33){\line(0,1){3}}
\linethickness{1.0pt}
\put(84,28){\line(0,1){4}}
\put(82,30){\line(1,0){4}}
\linethickness{0.3pt}
\put(0,24){\line(1,0){3}}
\put(9,24){\line(1,0){3}}
\linethickness{1.0pt}
\put(4,24){\line(1,0){4}}
\put(6,22){\line(0,1){4}}
\linethickness{0.3pt}
\put(12,24){\line(1,0){3}}
\put(21,24){\line(1,0){3}}
\linethickness{1.0pt}
\put(16,24){\line(1,0){4}}
\put(18,22){\line(0,1){4}}
\linethickness{0.3pt}
\put(24,24){\line(1,0){3}}
\put(33,24){\line(1,0){3}}
\linethickness{1.0pt}
\put(28,24){\line(1,0){4}}
\linethickness{0.3pt}
\put(36,24){\line(1,0){3}}
\put(45,24){\line(1,0){3}}
\linethickness{1.0pt}
\put(40,24){\line(1,0){4}}
\linethickness{0.3pt}
\put(48,24){\line(1,0){3}}
\put(57,24){\line(1,0){3}}
\linethickness{1.0pt}
\put(52,24){\line(1,0){4}}
\put(54,22){\line(0,1){4}}
\linethickness{0.3pt}
\put(60,24){\line(1,0){3}}
\put(69,24){\line(1,0){3}}
\linethickness{1.0pt}
\put(64,24){\line(1,0){4}}
\put(66,22){\line(0,1){4}}
\linethickness{0.3pt}
\put(72,24){\line(1,0){3}}
\put(81,24){\line(1,0){3}}
\linethickness{1.0pt}
\put(76,24){\line(1,0){4}}
\linethickness{0.3pt}
\put(84,24){\line(1,0){3}}
\put(93,24){\line(1,0){3}}
\linethickness{1.0pt}
\put(88,24){\line(1,0){4}}
\linethickness{0.3pt}
\put(12,12){\line(0,1){3}}
\put(12,21){\line(0,1){3}}
\linethickness{1.0pt}
\put(12,16){\line(0,1){4}}
\put(10,18){\line(1,0){4}}
\linethickness{0.3pt}
\put(24,12){\line(0,1){3}}
\put(24,21){\line(0,1){3}}
\linethickness{1.0pt}
\put(24,16){\line(0,1){4}}
\put(22,18){\line(1,0){4}}
\linethickness{0.3pt}
\put(36,12){\line(0,1){3}}
\put(36,21){\line(0,1){3}}
\linethickness{1.0pt}
\put(36,16){\line(0,1){4}}
\linethickness{0.3pt}
\put(48,12){\line(0,1){3}}
\put(48,21){\line(0,1){3}}
\linethickness{1.0pt}
\put(48,16){\line(0,1){4}}
\put(46,18){\line(1,0){4}}
\linethickness{0.3pt}
\put(60,12){\line(0,1){3}}
\put(60,21){\line(0,1){3}}
\linethickness{1.0pt}
\put(60,16){\line(0,1){4}}
\put(58,18){\line(1,0){4}}
\linethickness{0.3pt}
\put(72,12){\line(0,1){3}}
\put(72,21){\line(0,1){3}}
\linethickness{1.0pt}
\put(72,16){\line(0,1){4}}
\linethickness{0.3pt}
\put(84,12){\line(0,1){3}}
\put(84,21){\line(0,1){3}}
\linethickness{1.0pt}
\put(84,16){\line(0,1){4}}
\linethickness{0.3pt}
\put(0,12){\line(1,0){3}}
\put(9,12){\line(1,0){3}}
\linethickness{1.0pt}
\put(4,12){\line(1,0){4}}
\put(6,10){\line(0,1){4}}
\linethickness{0.3pt}
\put(12,12){\line(1,0){3}}
\put(21,12){\line(1,0){3}}
\linethickness{1.0pt}
\put(16,12){\line(1,0){4}}
\linethickness{0.3pt}
\put(24,12){\line(1,0){3}}
\put(33,12){\line(1,0){3}}
\linethickness{1.0pt}
\put(28,12){\line(1,0){4}}
\put(30,10){\line(0,1){4}}
\linethickness{0.3pt}
\put(36,12){\line(1,0){3}}
\put(45,12){\line(1,0){3}}
\linethickness{1.0pt}
\put(40,12){\line(1,0){4}}
\linethickness{0.3pt}
\put(48,12){\line(1,0){3}}
\put(57,12){\line(1,0){3}}
\linethickness{1.0pt}
\put(52,12){\line(1,0){4}}
\put(54,10){\line(0,1){4}}
\linethickness{0.3pt}
\put(60,12){\line(1,0){3}}
\put(69,12){\line(1,0){3}}
\linethickness{1.0pt}
\put(64,12){\line(1,0){4}}
\linethickness{0.3pt}
\put(72,12){\line(1,0){3}}
\put(81,12){\line(1,0){3}}
\linethickness{1.0pt}
\put(76,12){\line(1,0){4}}
\put(78,10){\line(0,1){4}}
\linethickness{0.3pt}
\put(84,12){\line(1,0){3}}
\put(93,12){\line(1,0){3}}
\linethickness{1.0pt}
\put(88,12){\line(1,0){4}}
\linethickness{0.3pt}
\put(12,0){\line(0,1){3}}
\put(12,9){\line(0,1){3}}
\linethickness{1.0pt}
\put(12,4){\line(0,1){4}}
\put(10,6){\line(1,0){4}}
\linethickness{0.3pt}
\put(24,0){\line(0,1){3}}
\put(24,9){\line(0,1){3}}
\linethickness{1.0pt}
\put(24,4){\line(0,1){4}}
\put(22,6){\line(1,0){4}}
\linethickness{0.3pt}
\put(36,0){\line(0,1){3}}
\put(36,9){\line(0,1){3}}
\linethickness{1.0pt}
\put(36,4){\line(0,1){4}}
\linethickness{0.3pt}
\put(48,0){\line(0,1){3}}
\put(48,9){\line(0,1){3}}
\linethickness{1.0pt}
\put(48,4){\line(0,1){4}}
\put(46,6){\line(1,0){4}}
\linethickness{0.3pt}
\put(60,0){\line(0,1){3}}
\put(60,9){\line(0,1){3}}
\linethickness{1.0pt}
\put(60,4){\line(0,1){4}}
\put(58,6){\line(1,0){4}}
\linethickness{0.3pt}
\put(72,0){\line(0,1){3}}
\put(72,9){\line(0,1){3}}
\linethickness{1.0pt}
\put(72,4){\line(0,1){4}}
\linethickness{0.3pt}
\put(84,0){\line(0,1){3}}
\put(84,9){\line(0,1){3}}
\linethickness{1.0pt}
\put(84,4){\line(0,1){4}}
\end{picture}
\\[2ex]
{\small$S_2(1)$} & {\small$S_2(2)$} & {\small$S_2(3)$}
	\end{tabular}
\end{center}
\caption{\label{fig: S1S2S3} 
  The first three generations of the 2-dimensional paperfolding structure.}
\end{figure}
\end{center}

For the general $d$-dimensional recursion we need to describe the
first $d$-fold, $S_d(1)$. The creases in $S_d(1)$ can be labelled by
vectors $\boldsymbol{\sigma}=(\sigma_1,\ldots,\sigma_d)$ $\in \{-1,0,1\}^d$,
which have precisely one component 0. We denote the set of admissible
crease labels $\boldsymbol{\sigma}$ by $C_d$. The position $k$ of the 
component $0$ identifies the orientation of the hyperplane containing 
the fold (perpendicular to the $k$-axis), and the remaining components 
$\sigma_i=\pm1$ together specify one of $2^{d-1}$ sectors within that 
hyperplane, via the conditions $\sigma_ix_i>0$. In each such sector,
the crease sign is constant. Therefore, to each crease label 
$\boldsymbol{\sigma}\in C_d$ there corresponds a unique crease sign, 
which we denote by $c(\boldsymbol{\sigma})$. In order to simplify the 
notation, in the following we denote by $\boldsymbol{\sigma}^k$ any 
crease label having the single component $0$ at position $k$, and by
$\sigma^k_i$ its $i$-th component.

The first 1-fold of the paper, along $x_1=0$, gives rise to the creases 
labelled by $\boldsymbol{\sigma}^1$, and all of them have the same sign 
$+$, so that
\[	c(\boldsymbol{\sigma}^1) = +,
\]
whatever the components $\sigma^1_i$ are.
The second $1$-fold of the paper, along $x_2=0$, gives rise to the
creases $\boldsymbol{\sigma}^2$. Here we have to notice that the
creases in the region $x_1<0$ must have the opposite sign. This gives
\[	c(\boldsymbol{\sigma}^2) = 
  \begin{cases}
    \mathtt{+} & \text{if } \sigma^2_1>0 \\
    \mathtt{-} & \text{if } \sigma^2_1<0 
  \end{cases}
\]
Continuing this way, we see that when making the $1$-fold along
$x_j=0$, we have to take into account that all creases in the region
$x_{j-1}<0$ have to have the opposite sign. This leads to the
recursion
\[	c(\boldsymbol{\sigma}^j) = 
  \begin{cases}
    \phantom{-} c(\boldsymbol{\sigma}^{j-1}) & \text{if } \sigma^j_{j-1}>0, \\
    - c(\boldsymbol{\sigma}^{j-1}) & \text{if } \sigma^j_{j-1}<0, 
  \end{cases}
\]
where $-c$ denotes the opposite sign of $c$, $\sigma^{j-1}_i=\sigma^j_i$ 
for $i<j-1$, and $\sigma^{j-1}_i=1$ for $i>j-1$. By induction it now 
easily follows that
\begin{equation}
  c(\boldsymbol{\sigma}^k) 
  = \text{sign} \Big(\prod_{i<k} \sigma^k_{i}\Big)
  \label{eq:  prod def of c}
\end{equation}
The collection of the creases given by the function $c$ on $C_d$ now 
allows to make the identification
\[
  S_d(1) \quad \leftrightarrow \quad  
  \bigcup_{\boldsymbol{\sigma} \in C_d} c(\boldsymbol{\sigma}).
\]
We will return to the properties of the function $c$ and the sign of
the creases of $S_d(1)$ in Section \ref{sec:properties}.

To describe $S_d(n+1)$, we start from $S_d(1)$ in the centre, and in
each of the $2^d$ orthants we place a reflected copy of $S_d(n)$. The
reflections can be described by
\[	M(\boldsymbol\phi) = \prod_{\{i\,:\, \phi_i = -1\}} m_i'
\]
where $\boldsymbol\phi = (\phi_1, \phi_2,\ldots,\phi_d)\in\{1,-1\}^d$
labels the orthants $\phi_i x_i>0$. We can now formulate the general 
recursion by the identification
\begin{equation}
\label{eq: gen rec}
S_d(n+1) \quad \leftrightarrow\quad 
S_d(1) \cup \Big(\bigcup_{\boldsymbol\phi\in\{\pm1\}^d} M(\boldsymbol\phi) S_d(n)\Big)
\end{equation}
with the initial condition $S_d(0)= \emptyset$.

\section{Substitution rule}
\label{sec:subst}

We present here a general scheme for creating a substitution for the
$d$-dimensional paperfolding structure. Recall that in the recursion
described above, a finite paperfolding pattern is extended by
appending reflected copies of itself. A substitution works differently.
Here, the extra creases that are inserted by the next $d$-fold 
in the interior of a cube are determined locally. A cube is first 
expanded by a factor 2, and then the new creases are inserted 
so that it gets divided into a block of $2^d$ new cubes, which have
again the original size. 

As it turns out, the creases that are inserted into a cube depend not
so much on the local crease pattern around the cube, but rather on the
parity of the position of the cube. We begin by discussing this in
dimension 1. The generalisation to higher dimensions is then immediate.

\begin{lemma}
\label{lemma: one dim s}
Let $s$ be a unit interval, with left end point in $x$, in the crease
pattern of $S_1(n)$, for $n\geq 2$. When folding the paper $n$ times
together, $s$ is facing downwards in the pile if and only if $x$ is
even.
\end{lemma}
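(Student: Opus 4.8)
The plan is to model the physical act of folding as a composition of reflection maps, and to track the orientation of each unit interval by the parity of the number of reflections it undergoes. Place the unfolded strip $S_1(n)$ so that it occupies $[0,2^n]$, with the $2^n$ unit intervals having integer left endpoints $0,1,\dots,2^n-1$; a shift by the even number $2^{n-1}$ from the centred convention of the paper changes no parities, so this is harmless. Folding the paper $n$ times means folding repeatedly at the current midpoint: the first fold sends $[0,2^n]$ to $[0,2^{n-1}]$, the next sends $[0,2^{n-1}]$ to $[0,2^{n-2}]$, and so on down to $[0,1]$. I denote by $F_L\colon[0,L]\to[0,L/2]$ the single fold at $L/2$, given by $F_L(x)=x$ for $x\le L/2$ and $F_L(x)=L-x$ for $x\ge L/2$; an interval lying in the reflected half $[L/2,L]$ has its orientation reversed, while one in $[0,L/2]$ keeps it. The final orientation of an interval in the pile is then ``up'' or ``down'' according to the parity of the total number of such reversals, i.e.\ according to whether the total folding map is locally orientation-preserving or -reversing there.

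First I would show that the full folding map $G_n=F_2\circ F_4\circ\cdots\circ F_{2^n}\colon[0,2^n]\to[0,1]$ is, \emph{independently of} $n$, the period-$2$ triangle wave $T$ with $T(x)=x$ on $[0,1]$, $T(x)=2-x$ on $[1,2]$, extended $2$-periodically. This is the heart of the argument, and I would prove it by induction on $n$. For the inductive step write $G_n=G_{n-1}\circ F_{2^n}$. On $[0,2^{n-1}]$ the outer fold is the identity, so $G_n=G_{n-1}=T$ by hypothesis. On $[2^{n-1},2^n]$ one has $G_n(x)=G_{n-1}(2^n-x)=T(2^n-x)$, and since $2^n$ is even while $T$ is even and $2$-periodic, $T(2^n-x)=T(x)$; hence $G_n=T$ on all of $[0,2^n]$.

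With this in hand the conclusion is immediate. The interval $s=[x,x+1]$ is mapped orientation-preservingly exactly when $T$ has positive slope on it, which happens iff $x$ is even, and orientation-reversingly (an odd number of reflections) iff $x$ is odd. Thus the orientation in the pile is governed solely by the parity of $x$, with no dependence on $n$ or on finer features of the crease pattern. Fixing the global sign by the stated convention that the paper is ``bent upwards'' at each fold then identifies the even-parity intervals as precisely those facing downwards, which is the assertion of the lemma; the restriction $n\ge2$ plays no role in the parity argument itself and only reflects the regime in which the statement is subsequently used.

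The step I expect to be the main obstacle is the clean bookkeeping of orientations through successive folds: one must check that once the stack carries several layers, a fold reflects all layers above a given footprint point simultaneously and identically, so that the accumulated reflection count of any single interval equals exactly the number of folds at whose stage its footprint lies in the upper half. Packaging this into the single composite map $G_n$, and reducing everything to the $n$-independence of $G_n$, is what makes the parity statement fall out uniformly without any case analysis.
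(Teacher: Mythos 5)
Your composite-map strategy is genuinely different from the paper's proof, and its core is correct. The paper argues by induction on $n$: it peels off the \emph{innermost} fold, applies the induction hypothesis to the length-$2$ parent interval (a case analysis on $x$ modulo $4$, according to whether that parent lies face up or face down on the pile), and checks that the last fold reflects exactly the half of the pile needed to propagate the statement. You instead collapse all $n$ folds into a single piecewise isometry $G_n$ and prove the closed-form identity $G_n=T$ with the $2$-periodic triangle wave; the induction $T(2^n-x)=T(-x)=T(x)$ is correct, and your closing remark that a fold reflects all layers over a footprint point simultaneously is the right justification for reading off an interval's orientation from the local slope of $G_n$. This buys a uniform, $n$-independent statement with no case analysis, which is attractive.

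There is, however, a genuine sign error, and the sentence ``fixing the global sign by the stated convention'' does not repair it --- it asserts the opposite of what your own model yields. Your fold $F_L$ reflects the \emph{upper} half $[L/2,L]$ onto the lower half, i.e.\ each fold moves the right half of the pile. The paper's convention is the mirror image: folds go ``from the negative to the positive $x_i$-axis'', so it is the \emph{left} half of the pile that is reflected at each step (the paper's proof states this explicitly: the left half ``gets reflected once more, whereas the right half \dots remains as it is''). The two processes are conjugate under $x\mapsto 2^n-x$, which sends the unit interval with left endpoint $x$ to the one with left endpoint $2^n-1-x$ --- of \emph{opposite} parity. Concretely, in your model the even-$x$ intervals are exactly those where $T$ increases, hence they suffer an even number of reflections and end \emph{face up}; a hand check for $n=2$ gives the pile over $[0,1]$ reading, bottom to top, $[0,1]$ up, $[3,4]$ down, $[2,3]$ up, $[1,2]$ down, so in your convention it is the \emph{odd} intervals that face downwards. (Bending ``upwards'' fixes the crease signs, valley at each fold; it does not affect which half moves.) The fix is short: either use the mirrored tent maps that reflect the lower half, for which the composite is $x\mapsto 2^n-T(x)$ with negative slope precisely on the even intervals, giving even $\Rightarrow$ down; or keep your $F_L$ and conjugate back, recording the parity swap $x\mapsto 2^n-1-x$. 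But the direction cannot be waved away: Lemma~\ref{lemma: ori. of s} and the substitution (\ref{eq: gen sub}) apply the reflections $m_i'$ precisely for \emph{even} coordinates, so getting this correspondence backwards would corrupt the substitution rule itself.
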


\begin{proof}
We give a proof by induction on $n$. By straight forward inspection,
it is easy to check that the Lemma holds for $n=2$. Suppose now that
it holds for some $n\ge2$, and let $s'$ be an interval of length $2$
in the crease pattern $S_1(n+1)$, with a left end point $x$ that is 
an even integer. When folding up $n$ times, we get a pile of intervals
of length $2$. If $x$ is twice an odd integer, $s'$ has its face up on
that pile. When folding the pile once more, the left half of the
pile, including the left half of $s'$, originally located at the
even integer $x$, gets reflected once more, whereas the right half
of $s'$, originally located at the odd position $x+1$, remains as
it is. In this case, the Lemma thus holds also for $n+1$. On the
other hand, if $x$ is twice an even integer, $s'$ is face down on
the pile, and the left and right halves of $s'$ have reversed order
on the pile. So, the right half of $s'$ now gets reflected again,
but not the left half, so that the Lemma holds for $n+1$ also in 
this case.
\end{proof}

In the same spirit as in Lemma \ref{lemma: one dim s} we can argue that 
a similar result holds in dimension $d$. For this generalisation, let
$s$ be a unit cube in the (unfolded) crease pattern of $S_d(n)$, 
$n\geq 2$. We associate a reference point $\boldsymbol{x}\in\mathbb{Z}^d$ 
to $s$, as the point in $s$ with the smallest coordinate values in all 
coordinate directions. To each cube $s$, we attach an orientation vector
$v\in\{1,-1\}^d$ pointing from the reference point of $s$ to the opposite
corner. When we talk about the orientation of $s$, we actually mean
its orientation vector. In the unfolded pattern, all orientations
are the same. However, reference point and orientation vector are
attached to the cube. When folding up the paper, cubes get reflected,
and at the same time, their reference points and orientation vectors 
are mapped to their mirror images. By the notation $m_{i}'s$ we mean 
the cube $s$ reflected along the $x_i$-axis. During this reflection, 
its orientation vector gets a sign change in its $i$-th component.
It is not hard to see that the reflections fulfil
\begin{equation}
\label{eq: m prop}
m_{i}'m_{j}'s = m_{j}'m_{i}'s \qquad \textnormal{and} \qquad m_{i}'m_{i}'s = s. 
\end{equation}
In other words, the reflections $m_i'$ are involutions and commute.
As we are interested only in the orientation of $s$, when folding the
paper together, it is enough to look at the chain of reflections
$m_{a_1}'m_{a_2}'\ldots m_{a_k}'$ due to the $n$ $d$-folds, and keep track
of the orientation of $s$. By the properties (\ref{eq: m prop}), 
this chain can be reduced to a chain of at most $d$ reflections, 
and with at most one reflection along each axis. The commutativity 
of the reflections implies that we may consider them as a product of 
1-dimensional reflections. By Lemma \ref{lemma: one dim s} it now 
follows that the resulting orientation of $s$ is only dependent on 
the parity of the coordinates in $\boldsymbol{x}$, as we can consider 
one axis at a time, and the $i$-th coordinate $\boldsymbol{x}$ only 
affects the sign of the $i$-th coordinate in $v$. To summarise, 
we have the following result.

\begin{lemma}
\label{lemma: d dim s}
Let $s$ be a unit cube in the crease pattern of $S_d(n)$, with
$n\ge2$ and $d\ge1$. When folding the paper $n$ times together, 
the orientation of $s$ only depends on the parity of the coordinates 
of the reference point $\boldsymbol{x}\in \mathbb{Z}^d$ of $s$.
\end{lemma}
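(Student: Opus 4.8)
The plan is to reduce the $d$-dimensional folding problem to $d$ independent one-dimensional problems, one for each coordinate axis, and then to invoke Lemma~\ref{lemma: one dim s} on each axis separately. What we must show is that the net reflection acting on the cube $s$, and hence its final orientation vector, is determined axis-by-axis by the parities $x_1\bmod2,\ldots,x_d\bmod2$ of the reference point.

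First I would record the total transformation that $s$ undergoes during the $n$ $d$-folds. Since each $d$-fold consists of one $1$-fold in each of the directions $1,\ldots,d$, after $n$ $d$-folds the cube has been acted on by some chain of reflections $m_{a_1}'m_{a_2}'\cdots m_{a_k}'$. Using the relations (\ref{eq: m prop}), which say that the $m_i'$ are commuting involutions, I would collapse this chain to a reduced product $\prod_{i\in I}m_i'$ for some subset $I\subseteq\{1,\ldots,d\}$ with at most one reflection per axis; the orientation vector $v$ of $s$ is then obtained by flipping exactly the components indexed by $I$. The whole problem thus becomes determining, for each axis $i$, whether $i\in I$, i.e.\ the parity of the number of direction-$i$ reflections that act on $s$.

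Next I would decouple the axes. The key observation is that a reflection $m_j'$ changes only the $j$-th coordinate of the reference point and only the $j$-th component of $v$, so folds in directions $j\neq i$ leave the $i$-th coordinate of the cube and the $i$-th component of $v$ untouched. Projecting the entire folding history onto the $i$-th axis therefore makes the direction-$i$ folds act exactly as the one-dimensional folding of $S_1(n)$ on the interval $[x_i,x_i+1]$, with all other folds invisible in this projection. Lemma~\ref{lemma: one dim s}, applicable since $n\geq2$, then shows that the $i$-th component of $v$ ends up in the ``down'' orientation precisely when $x_i$ is even, so whether $i\in I$ depends only on the parity of $x_i$. Ranging over all $i$ shows that $v$ depends only on the parities of the coordinates of $\boldsymbol{x}$, which is the assertion.

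The step I expect to be the main obstacle is the decoupling argument: one must argue cleanly that interleaving the $1$-folds across different directions within each $d$-fold does not spoil the per-axis one-dimensional picture. This rests on making precise the two facts that $m_i'$ acts coordinatewise, touching only axis $i$, and that the $m_i'$ commute, so that the \emph{net} orientation is insensitive to the order in which the individual $1$-folds are carried out, even though the resulting crease pattern itself certainly is not. Once these are established, the reduction to $d$ separate applications of Lemma~\ref{lemma: one dim s} is immediate.
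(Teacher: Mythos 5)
Your proposal is correct and takes essentially the same route as the paper: the paper likewise uses the commuting-involution relations (\ref{eq: m prop}) to collapse the chain of reflections to at most one per axis, and then invokes Lemma~\ref{lemma: one dim s} one axis at a time, noting that the $i$-th coordinate of $\boldsymbol{x}$ affects only the $i$-th component of the orientation vector. The decoupling step you single out as the main obstacle is handled in the paper by exactly the two facts you name, so nothing is missing.
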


An immediate consequence of Lemma \ref{lemma: d dim s} is that for a
unit cube $s$ in the crease pattern of $S_d(n)$, the creases that
will occur inside $s$, when folding the paper one additional time, do
not depend on the bounding creases of $s$, nor on the general position
of $s$, but only on the parity of the coordinates of its reference
point $\boldsymbol{x}$. Moreover, the crease pattern that will occur
inside $s$ is just a scaled and reflected copy of $S_d(1)$. Its 
precise orientation depends, again, only on the parity of the
coordinates of $\boldsymbol{x}$. As we can regard a $d$-fold as a
sequence of $d$ $1$-folds, Lemma \ref{lemma: one dim s} implies
the following result.

\begin{lemma}
\label{lemma: ori. of s}
Let $s$ be a unit cube in the crease pattern of $S_d(n)$, with
$n\ge2$ and $d\ge1$, and let $\boldsymbol{x}\in \mathbb{Z}^d$ 
be the reference point of $s$. When folding the paper $n+1$ times, 
the folds that will appear inside $s$ are
\begin{equation}
\label{eq: ori. of s}
\Big(\prod_{\{i\,:\, x_i \text{ even} \}}m_i'\Big)\, S_d(1). 
\end{equation}
\end{lemma}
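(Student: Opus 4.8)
The plan is to assemble the two preceding lemmas: Lemma~\ref{lemma: d dim s} reduces the question of \emph{which} reflection is inserted to a per-axis parity condition, and Lemma~\ref{lemma: one dim s} pins that condition down explicitly. First I would make precise the $(n+1)$-th $d$-fold. We fold the paper together using the first $n$ $d$-folds, bringing every unit cube of $S_d(n)$ into a common pile; the cube $s$ arrives there with some orientation vector $v\in\{1,-1\}^d$, obtained from its initial orientation $(1,\dots,1)$ by the chain of reflections incurred while folding. By the discussion following Lemma~\ref{lemma: d dim s}, the additional $d$-fold inserts into the footprint of $s$ exactly one scaled copy of the single-$d$-fold pattern $S_d(1)$, written in the local frame of the pile; these creases depend only on how $s$ sits in the pile, i.e.\ on $v$, and not on the surroundings of $s$.

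Next I would transport this pattern back to the original frame of $s$ by unfolding. Since by~\eqref{eq: m prop} the folding reflections are commuting involutions, the map carrying the pile frame of $s$ back to its original position equals the same product of reflections $m_i'$ that brought $s$ into the pile. Hence the creases appearing inside $s$ in the unfolded pattern are $M\,S_d(1)$ with $M=\prod_{\{i\,:\,v_i=-1\}}m_i'$, one reflection for each direction along which the orientation vector of $s$ was reversed. As a sanity check, an unreflected cube ($v=(1,\dots,1)$) yields the bare $S_d(1)$, matching the empty product.

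It then remains to identify $\{i:v_i=-1\}$. Because the $m_i'$ commute, I can analyse one axis at a time, viewing the $d$-fold as $d$ independent $1$-folds; this is exactly the reduction already used to derive Lemma~\ref{lemma: d dim s} from Lemma~\ref{lemma: one dim s}. Along axis $i$, reversal of the orientation vector means the interval of $s$ lies face down in the one-dimensional pile, which by Lemma~\ref{lemma: one dim s} occurs precisely when the coordinate $x_i$ of the reference point is even. Therefore $\{i:v_i=-1\}=\{i:x_i\text{ even}\}$, and substituting into $M$ gives the asserted expression~\eqref{eq: ori. of s}.

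The step I expect to be most delicate is the orientation bookkeeping: one must check that ``facing downwards'' in the one-dimensional pile genuinely corresponds to a reversal of the orientation vector---a $1$-fold simultaneously flips the paper over and reverses its in-line direction---and that the reflections are applied in the even-coordinate directions rather than the odd ones. The consistency check at an unreflected cube (all $x_i$ odd, giving the identity) is the quickest guard against an off-by-parity error.
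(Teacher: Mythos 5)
Your proposal is correct and takes essentially the same route as the paper, which establishes Lemma~\ref{lemma: ori. of s} in the discussion preceding its statement: decompose the $(n+1)$-th $d$-fold into $d$ commuting $1$-folds, use Lemma~\ref{lemma: d dim s} to reduce everything to the orientation vector, and use Lemma~\ref{lemma: one dim s} axis by axis to identify the flipped directions with the even coordinates. Your explicit pile-frame/unfolding bookkeeping (the inverse of a product of commuting involutions being the product itself) and the identification $\{i : v_i = -1\} = \{i : x_i \text{ even}\}$ merely make precise what the paper leaves implicit, and your all-odd sanity check agrees with the substitution tables in Eqs.~(\ref{s1mu}) and (\ref{s2mu}).
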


From here on we can now argue how to construct a substitution rule,
$\mu_d$, that generates the $d$-dimensional paperfolding structure. In
Section \ref{sec:examples} we shall illustrate this construction by 
concrete examples.
The crease pattern of a paperfolding structure divides space into
unit cubes, whose faces contain the creases, and where each crease
has a sign (valley or crest). In order to attribute each crease
to a unique cube, we introduce the concept of a \emph{semi-cube},
which is a half-open cube, containing only one of each pair of 
parallel faces, namely the one with smaller coordinates in the 
direction perpendicular to the face. These are the faces which 
contain the reference point of the cube. Each of these $d$ faces 
may contain a crease of either sign, so that we have at most 
$2^d$ crease configurations on a semi-cube. Furthermore, the 
reference point of $s$ may have $2^d$ different parity values, 
on which the fold introduced in the interior of the cube upon 
substitution will depend. Taking all together, we need at most
$2^d \cdot 2^d$ tile types to define a substitution, where our
tiles are semi-cubes carrying parity information and a crease 
pattern on their faces. Upon substitution, each semi-cube $s$ 
is now mapped to a block of $2^d$ semi-cubes. The creases on the
outer faces of the block are just copied from the faces of $s$
with the same orientation, whereas the faces in the interior
are a reflected copy of $S_d(1)$, as given by (\ref{eq: ori. of s}).

In the spirit of the construction by recursion, we can write the
substitution as
\begin{equation}
\label{eq: gen sub}
\mu_d\,:\,s'\mapsto 2\cdot s'\cup 
  \Big(\prod_{\{i\,:\, x_i \text{ even} \}}m_i'\Big)\,S_d(1) 
\end{equation}
where $s'$ is a semi-cube with the reference point $\boldsymbol{x}$,
and $2\cdot s'$ is the semi-cube scaled by a factor $2$.  As a
seed for generating the $d$-dimensional paperfolding structure with
the help of $\mu_d$, we take the central $2^d$-block of semi-cubes
in $S_d(2)$. Note that the substitution $\mu_d$ generates the 
paperfolding structure from the centre outwards.

\section{Examples}
\label{sec:examples}

Here we illustrate the general construction of the paperfolding
substitution presented in Section \ref{sec:subst} by giving a
more concrete, detailed representation for the 1- and 2-dimensional 
cases. In particular, we give a visual representation of the 
the map given in (\ref{eq: gen sub}).

Let us start with the 1-dimensional case, for which a substitution 
is already well known \cite{allouche95, dekking82}. We discuss
it here to illustrate the general construction. Any tile corresponds
to a semi-cube $s$ in the crease pattern of $S_1(n)$, that is a 
unit interval, with reference point $\boldsymbol{x} = (x_1)$ at the left 
end point. The right end point is removed, so that we have a half-open
unit interval. Upon substitution, each such semi-cube is doubled in size, 
with left end point unaffected by the substitution, and with
a reflected copy of $S_1(1)$ added to the interior, according to 
Lemma~\ref{lemma: ori. of s}. Hence, we may define $\mu_1$ as
follows:
\small
\begin{equation}
\begin{array}{r}
\begin{tabular}{m{12pt}c*{2}{m{36pt}}}
& & $x_1$ even & $x_1$ odd \\ 
\end{tabular}
\\[3ex]
\mu_1 : 
\left\{
\begin{tabular}{m{4pt}c*{2}{m{36pt}}@{}}
\plusline  & $\mapsto$ & \plusline\minusline  & \plusline\plusline  \\ \\
\minusline & $\mapsto$ & \minusline\minusline & \minusline\plusline
\end{tabular}
\right.
\end{array}
\label{s1mu}
\end{equation}
\normalsize 
For the seed we take the pattern \minus \plus, which covers the half-open 
interval $[-1,1)$, where the \plus crease is at the centre. 
The substitution $\mu_1$ can also be written as a 
substitution on the 4 letter alphabet $A = \{a_{ij}: 0\leq i,j\leq1 \}$, 
where the first index indicates the type of crease at the left end point
of the semi-cube, and the second index the parity of its position.  
The assignment of the indices is as given in the rows, resp.\ columns
of the table in Eq.~(\ref{s1mu}). Symbolically, the substitution can thus be 
written as
\small
\[
\nu_1 : \left\{
\begin{tabular}{@{}*{2}{r@{\,$\mapsto$\,}l}}
$a_{00}$ & $a_{00}$ $ a_{11}$, &
$a_{01}$ & $a_{00}$ $ a_{01}$,\\[1ex] 
$a_{10}$ & $a_{10}$ $ a_{11}$,& 
$a_{11}$ & $a_{10}$ $ a_{01}$,
\end{tabular}
\right.
\label{s1nu}
\]
\normalsize
with seed $a_{11}a_{00}$.

Similarly, we can define a substitution $\mu_2$ for the
2-dimensional paperfolding structure. Again, we start by considering a
unit square $s$ in the crease pattern of $S_2(n)$, with lower left
corner at $\boldsymbol{x}= (x_1,x_2)$. The crease pattern in the interior of
the inflated tiles follows from Lemma \ref{lemma: ori. of s}, resulting
in a substitution $\mu_2$ as follows:

\small
\begin{equation}
\begin{array}{r}
\begin{tabular}{m{12pt}c*{4}{m{36pt}}}
& & $x_1$ even & $x_1$ even & $x_1$ odd  & $x_1$ odd \\ 
& & $x_2$ even & $x_2$ odd  & $x_2$ even & $x_2$ odd 
\end{tabular}
\\[3ex]
\mu_2 : 
\left\{
\begin{tabular}{m{12pt}c*{4}{m{36pt}}@{}}
\unitcube{0} & $\mapsto$ & \fourcube{0}{1}{0}{0} & \fourcube{1}{2}{0}{2} 
& \fourcube{0}{3}{0}{2} & \fourcube{1}{0}{0}{0} \\ \\
\unitcube{1} & $\mapsto$ & \fourcube{0}{1}{1}{1} & \fourcube{1}{2}{1}{3} 
& \fourcube{0}{3}{1}{3} & \fourcube{1}{0}{1}{1} \\ \\
\unitcube{2} & $\mapsto$ & \fourcube{2}{1}{2}{0} & \fourcube{3}{2}{2}{2} 
& \fourcube{2}{3}{2}{2} & \fourcube{3}{0}{2}{0} \\ \\
\unitcube{3} & $\mapsto$ & \fourcube{2}{1}{3}{1} & \fourcube{3}{2}{3}{3} 
& \fourcube{2}{3}{3}{3} & \fourcube{3}{0}{3}{1} 
\end{tabular}
\right.
\end{array}
\label{s2mu}
\end{equation}
\normalsize As a seed to generate the 2-dimensional paperfolding
structure with the help of $\mu_2$, we may take
\begin{center}
\fourcube{3}{0}{1}{0}
\end{center}
where we put the origin at the centre. 

As in the 1-dimensional case, we can write the substitution $\mu_2$ symbolically,
as a block substitution on a 16 letter alphabet $B=\{ b_{ij}: 0\leq i,j\leq3 \}$.  
Here again, the first index indicates the crease pattern on the boundaries
of the semi-cube, and the second index the parities of the reference point,
in the order of the rows, resp.\  columns of the table in Eq.~(\ref{s2mu}).
Each letter is replaced by a block of letters as follows:
\small
\[
\nu_2 : \left\{
\begin{tabular}{@{}*{4}{r@{\,$\mapsto$}l}}
$b_{00}$ & \begin{tabular}{c@{\,}} $b_{01}$ $b_{13}$ \\ $b_{00}$ $b_{02}$ \end{tabular}, &
$b_{01}$ & \begin{tabular}{c@{\,}} $b_{11}$ $b_{23}$ \\ $b_{00}$ $b_{22}$ \end{tabular}, &
$b_{02}$ & \begin{tabular}{c@{\,}} $b_{01}$ $b_{33}$ \\ $b_{00}$ $b_{22}$ \end{tabular}, &
$b_{03}$ & \begin{tabular}{c@{\,}} $b_{11}$ $b_{03}$ \\ $b_{00}$ $b_{02}$ \end{tabular}, \\[3ex]
$b_{10}$ & \begin{tabular}{c@{\,}} $b_{01}$ $b_{13}$ \\ $b_{10}$ $b_{12}$ \end{tabular}, &
$b_{11}$ & \begin{tabular}{c@{\,}} $b_{11}$ $b_{23}$ \\ $b_{10}$ $b_{32}$ \end{tabular}, &
$b_{12}$ & \begin{tabular}{c@{\,}} $b_{01}$ $b_{33}$ \\ $b_{10}$ $b_{32}$ \end{tabular}, &
$b_{13}$ & \begin{tabular}{c@{\,}} $b_{11}$ $b_{03}$ \\ $b_{10}$ $b_{12}$ \end{tabular}, \\[3ex]
$b_{20}$ & \begin{tabular}{c@{\,}} $b_{21}$ $b_{13}$ \\ $b_{20}$ $b_{02}$ \end{tabular}, &
$b_{21}$ & \begin{tabular}{c@{\,}} $b_{31}$ $b_{23}$ \\ $b_{20}$ $b_{22}$ \end{tabular}, &
$b_{22}$ & \begin{tabular}{c@{\,}} $b_{21}$ $b_{33}$ \\ $b_{20}$ $b_{22}$ \end{tabular}, &
$b_{23}$ & \begin{tabular}{c@{\,}} $b_{31}$ $b_{03}$ \\ $b_{20}$ $b_{02}$ \end{tabular}, \\[3ex]
$b_{30}$ & \begin{tabular}{c@{\,}} $b_{21}$ $b_{13}$ \\ $b_{30}$ $b_{12}$ \end{tabular}, &
$b_{31}$ & \begin{tabular}{c@{\,}} $b_{31}$ $b_{23}$ \\ $b_{30}$ $b_{32}$ \end{tabular}, &
$b_{32}$ & \begin{tabular}{c@{\,}} $b_{21}$ $b_{33}$ \\ $b_{30}$ $b_{32}$ \end{tabular}, &
$b_{33}$ & \begin{tabular}{c@{\,}} $b_{31}$ $b_{03}$ \\ $b_{30}$ $b_{12}$ \end{tabular}, 
\end{tabular}
\right.
\label{s2nu}
\]
\normalsize
with seed \begin{tabular}{c@{\,}} $b_{32}$ $b_{00}$ \\ $b_{13}$ $b_{01}$ \end{tabular}.

\section{Properties of the Paperfolding Substitution}
\label{sec:properties}

In this Section we turn to the study of the properties of the
paperfolding substitution $\mu_d$. In particular, we must show
that it is {\em primitive}, which is defined as follows. 

\begin{definition}
  \label{def:primitive}
  Let $\rho$ be a symbolic block substitution on the alphabet
  $A=\{\alpha_1,\ldots,\alpha_m\}$. We say that $\rho$ is primitive if
  there is an integer $k$ such that for all $i$ the iteration
  $\rho^k(\alpha_i)$ contains all $\alpha_j$.
\end{definition}

In order to prove the primitivity, we must show that all types 
of semi-cube do occur in the generated structure at the same time. 
More precisely,
in each of the $2^d$ parity classes of sites, semi-cubes with all
$2^d$ combinations of creases on their faces must occur.

We begin by looking at sites $\boldsymbol{x}$, whose local crease 
pattern, on the faces containing $\boldsymbol{x}$, is a translated
and/or reflected copy of $S_d(1)$. This is the case for all sites
whose parities are all even or all odd. The signs of the creases
in $S_d(1)$, which is the result of the first $d$-fold, is given by
(\ref{eq:  prod def of c}). We shall now analyse the behaviour
of $S_d(1)$ under a sequence of reflections, that is, we look at
the signs of the creases of
\begin{equation}
    m_{a_1}'\ldots m_{a_r}' S_d(1). 
    \label{eq: seq of reflections of S1}
\end{equation}
Let $\boldsymbol{a}= a_1a_2\ldots a_r$ with $a_i \in {1,\ldots,d}$ be
the sequence of reflections of $S_d(1)$ from (\ref{eq: seq of
  reflections of S1}). Recall that the function $c$ is used to denote
the crease signs, and let us therefore generalise this notation to
$c_{\boldsymbol{a}}$, meaning the signs of the creases of $S_d(1)$
after a sequence of reflections, as in (\ref{eq: seq of reflections of S1}).

Let us start by considering the signs of the creases of $S_d(1)$ after
one reflection, that is, the signs of $m_j'S_d(1)$. As before let
$\boldsymbol{\sigma}^k$ label the creases. Recall (\ref{eq:  prod def of c})
that $c(\boldsymbol{\sigma}^k)$ changes the sign under 
$\sigma^k_j \mapsto -\sigma^k_j$ if $j<k$, and stays the same otherwise. 
As $m'_j$ reflects at $x_j=0$ and then reverts all creases, this gives
\[	
c_j(\boldsymbol{\sigma}^k) =
\begin{cases}
  \text{sign}  \Big( - \displaystyle{\prod_{i<k}} \sigma^k_{i}\Big) & 
     \textnormal{if } k \le j \vspace{2ex}\\
  \text{sign}  \Big( \displaystyle{ \prod_{i<k}} \sigma^k_{i}\Big) & 
     \textnormal{if } j < k 
\end{cases}
\]
For a sequence of reflections this generalises to 
\begin{equation}
	c_{\boldsymbol{a}}(\boldsymbol{\sigma}^k) =
 \text{sign}  \Big( (-1)^{N(\boldsymbol{a},k)} 
    \displaystyle{\prod_{i<k}} \sigma^k_{i}\Big) 
\label{eq: gen sign of walls} 
\end{equation}
with $N(\boldsymbol{a},k) = \big| \{t: k \leq a_t, 1\leq t\leq |\boldsymbol{a}|\}\big|$.
From (\ref{eq: gen sign of walls}) we see that we may choose the
elements of $\boldsymbol{a}$ to be unique and to be in strictly
increasing order, that is, $a_i<a_{i+1}$.

\vspace{2ex}

Let us now look at the signs of the creases in the first orthant 
(the orthant with $x_i>0$ for all $i$) of $S_d(1)$ and its reflections. 
The fold $S_d(1)$ divides the space into $2^d$ orthants, where the first
orthant contains the semi-cube with the origin as reference point.
The reflections of $S_d(1)$ will in general contain a different
semi-cube in their first orthant. Let $Q(d)$ be the set of all 
semi-cubes contained in the first orthant of some reflection of 
$S_d(1)$. Then we have the following result.

\begin{lemma}
  \label{lemma: all semi cubes}
  All $2^d$ types of semi-cube occur in the first orthant of some
  reflection of $S_d(1)$. In other words, $|Q(d)| = 2^d$.
\end{lemma}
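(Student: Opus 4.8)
The plan is to read off the type of the semi-cube that occupies the first orthant after a sequence of reflections directly from the sign formula (\ref{eq: gen sign of walls}), and then to reduce the surjectivity statement $|Q(d)|=2^d$ to an elementary bijectivity over $\mathbb{F}_2$.

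First I would pin down the semi-cube in question. The semi-cube in the first orthant of $S_d(1)$ has the origin as its reference point, and its $d$ faces containing the origin lie in the hyperplanes $x_k=0$ for $k=1,\ldots,d$, each extending into the region where $x_i>0$ for $i\ne k$. Hence the crease on the face perpendicular to $x_k$ carries the label $\boldsymbol{\sigma}^k$ with $\sigma^k_i=+1$ for all $i\ne k$ (and $\sigma^k_k=0$); in particular $\prod_{i<k}\sigma^k_i=1$, so by (\ref{eq:  prod def of c}) these creases are all positive in $S_d(1)$ itself, as expected. Since the type of a semi-cube is precisely the $d$-tuple of crease signs on these $d$ faces, the type of the first-orthant semi-cube in the reflected pattern $m_{a_1}'\cdots m_{a_r}'S_d(1)$ is, by (\ref{eq: gen sign of walls}),
\[ \big((-1)^{N(\boldsymbol{a},1)},\,(-1)^{N(\boldsymbol{a},2)},\,\ldots,\,(-1)^{N(\boldsymbol{a},d)}\big), \]
where $(-1)^N$ is read as the sign $+$ (resp.\ $-$) when $N$ is even (resp.\ odd). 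The product $\prod_{i<k}\sigma^k_i$ has dropped out entirely, so the type depends on $\boldsymbol{a}$ only through the parities $N(\boldsymbol{a},k)\bmod 2$.

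Next, by the observation following (\ref{eq: gen sign of walls}) that the entries of $\boldsymbol{a}$ may be taken distinct and strictly increasing, I would identify $\boldsymbol{a}$ with a subset $A\subseteq\{1,\ldots,d\}$, so that $N(\boldsymbol{a},k)=|\{a\in A:a\ge k\}|$. The claim then becomes that the map $\Phi$ sending $A$ to the tuple with $k$-th entry $\sum_{a\ge k}\chi_a\bmod 2$, where $\chi_a$ is the indicator of $a\in A$, is surjective onto $\{0,1\}^d$. As domain and codomain each have $2^d$ elements, it suffices to prove $\Phi$ injective, equivalently a bijection.

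The final step is this bijectivity, which is the only real content and is short. The map $\Phi$ is $\mathbb{F}_2$-linear, and its components are the suffix sums $\Phi_k=\chi_k+\chi_{k+1}+\cdots+\chi_d$. These are inverted by $\chi_k=\Phi_k+\Phi_{k+1}$, with the convention $\Phi_{d+1}=0$, so $\Phi$ has an explicit two-sided inverse and is a bijection; equivalently, its matrix is upper-triangular with $1$'s on the diagonal. Consequently every sign pattern in $\{0,1\}^d$ is realised by exactly one reflection class $A$, so all $2^d$ semi-cube types occur and $|Q(d)|=2^d$. I expect the only delicate point to be the bookkeeping in the first step, namely correctly matching the first-orthant faces to the labels $\boldsymbol{\sigma}^k$ with $\sigma^k_i=+1$ for $i\ne k$ and verifying that $\prod_{i<k}\sigma^k_i$ reduces to $1$; once that is settled, the remaining combinatorics is routine.
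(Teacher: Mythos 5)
Your proof is correct, and it follows the paper's skeleton up to the last step: like the paper, you reduce reflection words to strictly increasing sequences, i.e.\ subsets of $\{1,\ldots,d\}$, and read the type of the first-orthant semi-cube off (\ref{eq: gen sign of walls}) as the parity vector $\big((-1)^{N(\boldsymbol{a},k)}\big)_{k=1}^{d}$ (your bookkeeping here is right: the first-orthant faces carry labels $\boldsymbol{\sigma}^k$ with $\sigma^k_i=+1$ for $i\ne k$, so the product $\prod_{i<k}\sigma^k_i$ is trivially $1$). Where you diverge is in how bijectivity of the subset-to-type map is established. The paper proves \emph{injectivity}: using $0\le N(\boldsymbol{a},k-1)-N(\boldsymbol{a},k)\le 1$, it argues that equal parities for all $k$ force the integer values $N(\boldsymbol{a},k)$ and $N(\boldsymbol{b},k)$ to coincide, hence $\boldsymbol{a}=\boldsymbol{b}$, and then $|Q(d)|=2^d$ follows by counting the $2^d$ subsets against the at most $2^d$ types. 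You instead prove \emph{surjectivity} directly, observing that the parity map is the $\mathbb{F}_2$-linear suffix-sum operator $\Phi_k=\chi_k+\cdots+\chi_d$, unitriangular and explicitly inverted by $\chi_k=\Phi_k+\Phi_{k+1}$. Your version buys two things: it is constructive, in that for any prescribed crease configuration on the first-orthant semi-cube one can immediately write down the unique reflection set realizing it, and it dispenses with the pigeonhole/counting step, since a two-sided inverse gives bijectivity outright. In effect, your $\mathbb{F}_2$ inversion is the mod-2 shadow of the paper's monotonicity argument, which first recovers the integer-valued $N(\boldsymbol{a},\cdot)$ from its parities and only then extracts the subset; both are sound, and yours is arguably the cleaner closing step.
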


\begin{proof}
As noted before, we have at most $2^d$ different semi-cubes, so that
$|Q(d)|\leq 2^d$. To prove equality, note first that the creases in
the semi-cube in the first orthant of $S_d(1)$ can be described by
$\boldsymbol{\sigma}^k \in \{0,1\}^d$ with precisely one 0.  Let
$\boldsymbol{a} = a_1a_2\ldots a_r$ and $\boldsymbol{b} = b_1b_2\ldots
b_t$ with $a_i,b_i\in \{1,\ldots,d\}$ be two sequences of reflections. 
We may, as noted above, assume that both sequences are strictly 
increasing. Assume that
\begin{equation}
c_{\boldsymbol{a}}(\boldsymbol{\sigma}^k) = c_{\boldsymbol{b}}(\boldsymbol{\sigma}^k)
\qquad
\text{for all }1\leq k\leq d,
\label{eq: assumption ca = cb}
\end{equation}
that is, two sequences of reflections give rise to the same semi-cube
in the first orthant. The strict monotonicity of the sequence 
$\boldsymbol{a}$ implies that
\[
0 \leq N(\boldsymbol{a},k-1) - N(\boldsymbol{a},k) \leq 1 
\quad \text{for } k = 2,\ldots, d, 
\]
and similar for the sequence $\boldsymbol{b}$. This at most one step
change on $N$ implies that $N(\boldsymbol{a},k) = N(\boldsymbol{b},k)$
for all $1\leq k\leq d$, since otherwise we would, 
via (\ref{eq: gen sign of walls}),
\[	(-1)^{N(\boldsymbol{a},k)} = (-1)^{N(\boldsymbol{b},k)},
\]
contradict the assumption (\ref{eq: assumption ca = cb}). Therefore
$\boldsymbol{a} = \boldsymbol{b}$, that is, two sequences of
reflections that give rise to the same semi-cube must be the same.
\end{proof}

Consider now the points $(\pm1,\pm1,\ldots,\pm1)$ in the crease
pattern $S_d(3)$. They have their parities all odd, and
they form an orbit of length $2^d$ under the reflection group.
Likewise, their local crease patterns also form an orbit under
the reflection group. As the local crease pattern at $(1,1,\ldots,1)$
is $S_d(1)$, by Lemma \ref{lemma: all semi cubes} we conclude that
this orbit has length $2^d$, and thus all $2^d$ semi-cube types
occur at this parity class of points. The same argument also
applies to the points $(\pm2,\pm2,\ldots,\pm2)$, with parities all 
even. 

Unfortunately, there are also points whose local crease pattern
is not a reflection of $S_d(1)$. However, the local crease pattern
of any point is obtained by a sequence of $d$ $1$-folds in $d$ 
different directions, in {\em some} order. These crease patterns 
are thus obtained from $S_d(1)$ (and its reflections) by a 
permutation of the main coordinate axes. The local crease patterns 
located on points in an orbit under the reflection group still
form an orbit of full length, so that also on these points, which
are all in the same parity class, all types of semi-cubes occur.
We therefore obtain the following Theorem:

\begin{theorem}
\label{thm: primitive}
The paperfolding substitution $\mu_d$ is primitive.
\end{theorem}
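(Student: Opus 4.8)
The plan is to prove the sharper, quantitative statement that there is a single exponent $k$ (in fact $k=4$ will do) such that for \emph{every} tile type $T$ the patch $\mu_d^k(T)$ already contains all $2^d\cdot 2^d$ tile types, which is exactly Definition~\ref{def:primitive}. The whole argument rests on three elementary ``moves'' read off from \eqref{eq: gen sub}. First, a \emph{parity-mixing} move: a tile with reference point $\boldsymbol{x}$ is sent to the $2^d$ semi-cubes with reference points $2\boldsymbol{x}+\boldsymbol{\epsilon}$, $\boldsymbol{\epsilon}\in\{0,1\}^d$, whose parities exhaust all of $\{0,1\}^d$ \emph{independently of} the parity of $\boldsymbol{x}$. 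Second, the all-even child ($\boldsymbol{\epsilon}=\boldsymbol{0}$) carries only outer faces, so it reproduces the parent crease configuration $c$ with parity reset to even; in particular every all-even tile carries a self-loop. Third, the all-odd child ($\boldsymbol{\epsilon}=\boldsymbol{1}$) carries only interior faces, so by Lemma~\ref{lemma: ori. of s} its crease configuration is that of the first-orthant semi-cube of $\bigl(\prod_{i:\,x_i\text{ even}}m_i'\bigr)S_d(1)$; by Lemma~\ref{lemma: all semi cubes} this depends only on the parent parity $\boldsymbol{p}$ and defines a \emph{bijection} $\kappa\colon\{0,1\}^d\to\{\text{crease types}\}$.

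The first two stages are then immediate. Applying parity-mixing and then the all-odd move to an arbitrary $T$ produces, at the second iterate, the tiles $(\kappa(\boldsymbol{p}),\boldsymbol{1})$ for all parities $\boldsymbol{p}$; since $\kappa$ is onto, $\mu_d^2(T)$ contains \emph{every} crease configuration at parity $\boldsymbol{1}$. Feeding these all-odd parents once more through \eqref{eq: gen sub}, where now the interior pattern is the unreflected $S_d(1)$ because $M(\boldsymbol{1})=\mathrm{id}$, the $\boldsymbol{q}$-child inherits a freely chosen configuration on the faces $I_0(\boldsymbol{q})=\{i:q_i=0\}$ and a fixed $S_d(1)$-pattern on $I_1(\boldsymbol{q})=\{i:q_i=1\}$. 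Hence $\mu_d^3(T)$ contains, at \emph{every} parity $\boldsymbol{q}$, the full coset $C_{\boldsymbol{q}}$ of crease types that are free on the even directions $I_0(\boldsymbol{q})$ and pinned to $S_d(1)$ on the odd directions $I_1(\boldsymbol{q})$.

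The final and only delicate stage is to reach an arbitrary target $(e,\boldsymbol{q})$ from the cosets $C_{\boldsymbol{p}}$ already available in $\mu_d^3(T)$. Realising $(e,\boldsymbol{q})$ as the $\boldsymbol{q}$-child of a parent $(c,\boldsymbol{p})$ splits the task: on the inherited faces $I_0(\boldsymbol{q})$ we need $c_i=e_i$, while on the interior faces $I_1(\boldsymbol{q})$ the signs are prescribed by the parent parity through \eqref{eq: gen sign of walls}. The key observation is that these interior signs depend on $\boldsymbol{p}$ only through the suffix counts $N_i(\boldsymbol{p})=|\{j\ge i:p_j=0\}|\bmod 2$, and that the map $\boldsymbol{p}\mapsto(N_i(\boldsymbol{p}))_{i}$ is a triangular bijection of $\{0,1\}^d$; restricted to parities $\boldsymbol{p}$ supported inside $\mathrm{supp}(\boldsymbol{q})=I_1(\boldsymbol{q})$ it still surjects onto all sign patterns on $I_1(\boldsymbol{q})$. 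Thus a unique such $\boldsymbol{p}$ produces the desired interior signs; moreover $\mathrm{supp}(\boldsymbol{p})\subseteq\mathrm{supp}(\boldsymbol{q})$ forces $I_0(\boldsymbol{q})\subseteq I_0(\boldsymbol{p})$, so the coset $C_{\boldsymbol{p}}$ is free on a set of faces containing $I_0(\boldsymbol{q})$, and we may choose $c\in C_{\boldsymbol{p}}$ with $c|_{I_0(\boldsymbol{q})}=e|_{I_0(\boldsymbol{q})}$. The $\boldsymbol{q}$-child of this parent is exactly $(e,\boldsymbol{q})$, so $\mu_d^4(T)$ contains all tile types.

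I expect this last stage to be the main obstacle: it is where interior and inherited creases must be controlled \emph{simultaneously}, and the argument closes only because the reflection signs of $S_d(1)$ have the special suffix-sum form of \eqref{eq: gen sign of walls}, which makes the parity-to-interior-sign map a bijection compatible with the support condition $\mathrm{supp}(\boldsymbol{p})\subseteq\mathrm{supp}(\boldsymbol{q})$. Once $\mu_d^4(T)\supseteq\{\text{all tile types}\}$ is established for every $T$, primitivity follows directly from Definition~\ref{def:primitive}; equivalently, the same three moves show that the substitution graph is strongly connected and, via the self-loops at all-even tiles, aperiodic, so the substitution matrix is primitive.
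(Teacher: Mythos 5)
Your proposal is correct, but it takes a genuinely different route from the paper's proof. Both arguments hinge on the same two ingredients --- the sign formula \eqref{eq: gen sign of walls} and Lemma~\ref{lemma: all semi cubes}, whose injectivity statement is exactly what makes your map $\kappa$ a bijection --- but they diverge in how the mixed-parity tile types are reached. The paper argues geometrically inside the fixed patch $S_d(3)$: the points $(\pm1,\ldots,\pm1)$ and $(\pm2,\ldots,\pm2)$ form full orbits of length $2^d$ under the reflection group, so their local patterns exhaust all crease types in the all-odd and all-even parity classes, and the mixed-parity classes are handled by noting that local patterns there are copies of reflections of $S_d(1)$ with the coordinate axes permuted, again forming full orbits; primitivity is then read off from $S_d(3)$, with the uniform exponent only mentioned in a subsequent remark. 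You instead stay entirely inside the substitution \eqref{eq: gen sub}, composing three explicit moves (parity mixing, inheritance of outer faces by the all-even child, first-orthant insertion at the all-odd child via Lemma~\ref{lemma: ori. of s}) and closing the mixed-parity case with the suffix-count map $\boldsymbol{p}\mapsto(N_i(\boldsymbol{p}))_i$ over $\mathbb{F}_2$. I checked this last, delicate step and it is sound: the forced zeros of $\boldsymbol{p}$ on $I_0(\boldsymbol{q})$ contribute only a fixed shift to each suffix count, the free bits on $I_1(\boldsymbol{q})$ act through a unipotent triangular map and hence realise any prescribed interior signs (uniquely), and since $\mathrm{supp}(\boldsymbol{p})\subseteq\mathrm{supp}(\boldsymbol{q})$ forces $I_0(\boldsymbol{q})\subseteq I_0(\boldsymbol{p})$ while the pinned coordinates of the coset $C_{\boldsymbol{p}}$ lie in $I_1(\boldsymbol{p})\subseteq I_1(\boldsymbol{q})$, where the child's faces are overwritten by the new interior creases, the inherited and interior constraints never clash. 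In terms of what each approach buys: the paper's orbit argument is shorter and exploits the reflection symmetry, but its treatment of the mixed-parity classes (via axis permutations) is its least detailed step; your version carries more bookkeeping but is fully constructive, handles the mixed parities rigorously, and directly establishes Definition~\ref{def:primitive} with the uniform exponent $k=4$ for every tile type, matching the paper's remark that four iterations from a single semi-cube suffice.
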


\begin{proof}
The crease pattern $S_d(3)$ contains points of all parity classes,
whose orbit under the reflection group has length $2^d$. The 
semi-cubes with origin in such an orbit of points (which are all
in the same parity class) occur with $2^d$ different crease patterns
on their faces. In other words, all semi-cube types occur with
reference points in all parity classes.
\end{proof}

We note that, independently of the dimension, three iterations
of the substitution are enough to generate all semi-cube types
from our standard seed, and four iterations from a single 
semi-cube of any type.

As indicated in the introduction, the existence of a generating
primitive substitution has a number of far reaching consequences 
for the $d$-dimensional paperfolding structures. These are all
standard for primitive substitution structures, compare \cite{BG}.

To start, one should first note that, instead of studying an 
individual structure, it has many advantages if one studies its 
{\em hull}. The hull $\Omega_d^{pf}$ of the $d$-dimensional paperfolding 
structures is given by the closure of the translation orbit of 
one $d$-dimensional paperfolding structure, with respect to a topology 
in which two patterns are $\epsilon$-close if they agree in a ball
$B_{1/\epsilon}$ centred at the origin, up to a translation of order
$\epsilon$. The hull is a compact space, and consists of all patterns 
locally indistinguishable from the starting one, i.e., all structures 
having the same finite subpatterns. Every element of the hull is repetitive,
i.e., given any finite subpattern, its translates occurring in the
structure form a relatively dense set (a set without holes of arbitrary
size). There is a natural translation action by homeomorphisms on 
the hull. That is, the hull, equipped with this group action, forms
a topological dynamical system. Due to the repetitivity of the elements
of the hull, this action is minimal (every orbit is dense), and it
is uniquely ergodic, admitting a unique translation invariant 
probability measure. This latter property is related to the fact
that any finite subpattern has a well-defined frequency.

An interesting question is whether we can say something on the
nature of the diffraction spectrum of $d$-dimensional paperfolding
structures, or, equivalently, on the dynamical spectrum of the
translation action on the hull (compare \cite{BG}). Having a
generating primitive block substitution is of help here, too.
For such structures, there is a simple criterion by Dekking 
\cite{dekking78}. If it is satisfied, the structure has pure point
dynamical and diffraction spectrum.

\begin{definition}
We say that the $d$-dimensional block substitution $\rho$ admits a 
coincidence in the sense of Dekking \cite{dekking78}, if there are 
integers $k$ and $t_1,\ldots,t_d$, such that for all $i$, the 
iteration $\rho^k(\alpha_i)$ has the same symbol $\alpha_j$ at 
position $(t_1,\ldots,t_d)$.
\end{definition}

\begin{theorem}
The $d$-dimensional paperfolding structures have pure point 
diffraction spectrum, and the associated dynamical system given
by the translation action on the hull has pure point dynamical
spectrum.
\end{theorem}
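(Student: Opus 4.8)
The plan is to invoke Dekking's theorem \cite{dekking78}: a primitive block substitution that admits a coincidence generates a dynamical system with pure point dynamical spectrum, and for such structures pure point dynamical spectrum is equivalent to pure point diffraction spectrum (compare \cite{BG}). Since primitivity of $\mu_d$ is already established in Theorem~\ref{thm: primitive}, the whole task reduces to exhibiting a single coincidence, i.e.\ integers $k$ and $(t_1,\ldots,t_d)$ such that $\mu_d^k(\alpha_i)$ carries the \emph{same} tile at position $(t_1,\ldots,t_d)$ for every tile type $\alpha_i$.

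The key structural observation I would isolate is the following. When $\mu_d$ is applied to a semi-cube $s'$ with reference point $\boldsymbol{x}$, the block of $2^d$ semi-cubes it produces (see (\ref{eq: gen sub})) has its outer creases copied from $s'$, while its interior creases form a reflected copy of $S_d(1)$ that, by Lemma~\ref{lemma: ori. of s}, depends \emph{only} on the parity of $\boldsymbol{x}$. Consider now the sub-cube at the corner of the block furthest from $\boldsymbol{x}$, namely the one with reference point $2\boldsymbol{x}+(1,\ldots,1)$. All $d$ faces it carries (the lower faces, with smaller coordinates) sit at $x_i=2x_i+1$, strictly between $2x_i$ and $2x_i+2$, hence are interior faces of the block. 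Consequently its entire crease configuration is governed by (\ref{eq: ori. of s}) and depends only on the parity of $\boldsymbol{x}$, not on the creases of $s'$. Moreover its reference point $2\boldsymbol{x}+(1,\ldots,1)$ has all coordinates odd, independently of $\boldsymbol{x}$, so this far-corner tile always lands in the all-odd parity class.

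I would then iterate once more. After the first step the far-corner tile always has all-odd parity, so when $\mu_d$ is applied to it and one again passes to its far corner, the inserted interior creases are dictated by the all-odd parity class; there the reflection product $\prod_{\{i:x_i\text{ even}\}}m_i'$ in (\ref{eq: ori. of s}) is empty, and the far-far-corner tile carries precisely the creases of $S_d(1)$ itself, again with all-odd parity. This tile type is completely independent of the parity of the original $\boldsymbol{x}$, hence independent of the starting tile $\alpha_i$. Therefore $\mu_d^2(\alpha_i)$ shows the very same tile at the extreme corner of the $4^d$-block, i.e.\ at position $(t_1,\ldots,t_d)=(3,\ldots,3)$, for every $i$; this is a Dekking coincidence with $k=2$, and his theorem then yields the claim. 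One can verify this directly in the symbolic substitutions of Section~\ref{sec:examples}, where the common corner symbols come out as $a_{01}$ for $d=1$ and $b_{03}$ for $d=2$.

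All steps are light once the key observation is in place; the only point requiring care is checking that the far-corner semi-cube has \emph{all} of its carried faces among the interior faces of the block, so that Lemma~\ref{lemma: ori. of s} applies to each of them and no boundary crease of $s'$ leaks in. This is exactly where the semi-cube convention — retaining only the lower faces — does the work, forcing the far-corner crease data to be parity-determined after one step and fully tile-independent after two. I do not expect a genuine obstacle beyond making this local bookkeeping precise.
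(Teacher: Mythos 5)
Your proposal is correct and takes essentially the same route as the paper: both substitute twice and exhibit the Dekking coincidence at position $(3,\ldots,3)$, using that the far-corner tile after one substitution always lies in the all-odd parity class, so the tile the second substitution places at the far corner is independent of the starting semi-cube. Your extra bookkeeping --- checking via the semi-cube convention that all carried faces of the far-corner tile are interior faces of the block, so Lemma~\ref{lemma: ori. of s} governs them entirely --- merely makes explicit what the paper's proof leaves implicit, and your symbolic verifications ($a_{01}$ for $d=1$, $b_{03}$ for $d=2$) are accurate.
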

 
\begin{proof}
By \cite{dekking78}, it is enough to show that the paperfolding 
substitutions $\mu_d$ admit a coincidence in the sense of Dekking 
(for the higher-dimensional case, see also \cite{frank}).
Substituting a semi-cube with reference point at the origin once,
we obtain at position $(1,\ldots,1)$ a semi-cube with parities 
all odd. Substituting a second time, the semi-cube at 
$(1,\ldots,1)$ is mapped to a $2^d$ block at $(2,\ldots,2)$. 
The semi-cube at $(3,\ldots,3)$ in this block depends only on the 
parity of the semi-cube at $(1,\ldots,1)$ after the first substitution, 
which was all odd. So, the semi-cube at $(3,\ldots,3)$ is always the 
same, whatever the starting semi-cube was.
\end{proof}

Having a generating substitution also allows to estimate the
complexity of the generated structures. A natural complexity
measure is the growth of the number of distinct cubic subpatterns
of linear size $n$. The key observation here is, that there is
a hierarchy of semi-cubes of all orders in the structure.
The substitution maps a semi-cube to a block of $2^d$ semi-cubes,
which we call a super-semi-cube. This super-semi-cube is mapped
to a super-semi-cube of order two, and so on. The arrangement
of super-semi-cubes of any order now is, after rescaling, locally
indistinguishable from the arrangement of semi-cubes (the structure
is self-similar). In particular, the number of distinct $2^d$ blocks 
of super-semi-cubes of order $k$ is the same as the number of $2^d$ 
blocks of semi-cubes. This allows us to estimate the complexity function.

\begin{theorem}
\label{thm: growth}
  The number of distinct cubic subpatterns of linear size $n$ 
  in a $d$-dimensional paperfolding structure grows at most as
  $\text{const}\cdot n^d$.
\end{theorem}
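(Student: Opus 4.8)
The plan is to exploit the self-similar hierarchy of super-semi-cubes, together with the fact recorded in the paragraph just above the theorem: the number of distinct $2^d$ blocks of order-$k$ super-semi-cubes equals a fixed constant $B$, namely the number of distinct $2^d$ blocks of ordinary semi-cubes, \emph{independently} of the level $k$. The strategy is to match the scale of a given cubic patch to a suitable level of the hierarchy, cover the patch by a single $2^d$ block of super-semi-cubes of that level, and then count how such a patch can be specified.

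First I would fix a cubic patch $P$ of linear size $n$ (an $n\times\cdots\times n$ block of unit semi-cubes) and choose the unique level $k=\lceil\log_2 n\rceil$, so that $2^{k-1}<n\le 2^k$. At this level each super-semi-cube has linear size $2^k\ge n$. Since a segment of length $n\le 2^k$ meets at most two consecutive intervals of length $2^k$, the patch $P$ is contained in the region covered by one $2^d$ block of order-$k$ super-semi-cubes, namely the block whose lower corner is the reference point $\boldsymbol{z}$ of the super-semi-cube containing the reference point $\boldsymbol{y}$ of $P$.

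Next I would observe that $P$ is completely determined by two pieces of data: the types of the $2^d$ super-semi-cubes in this block, and the offset $\boldsymbol{u}=\boldsymbol{y}-\boldsymbol{z}\in\{0,\ldots,2^k-1\}^d$ of $P$ within the block. Indeed, each order-$k$ super-semi-cube is the $k$-fold image of a semi-cube under $\mu_d$, so its entire interior crease pattern is fixed by its type; hence the $2^d$ block fixes all creases in the covered region, and $P$ is just the window of size $n$ read off at offset $\boldsymbol{u}$. Containment is immediate, since in each coordinate $u_i+(n-1)\le 2(2^k-1)<2^{k+1}$, which lies inside the block. Thus the assignment (block, offset)$\mapsto P$ is a surjection onto the set of patches of size $n$.

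Finally I would count. By the self-similarity remark the number of admissible $2^d$ super-semi-cube blocks is the constant $B$, and the number of offsets is $(2^k)^d=2^{kd}$, so the number $p(n)$ of distinct patches obeys $p(n)\le B\cdot 2^{kd}$. Using $2^k<2n$ yields $p(n)< B\cdot 2^d\, n^d=\text{const}\cdot n^d$, as claimed. The only step where the substitution structure genuinely enters — and the main obstacle to a fully self-contained argument — is the assertion that $B$ does not depend on $k$; this is exactly the recognizability/self-similarity statement already stated before the theorem, so I would invoke it rather than reprove it, and treat the remainder as elementary counting.
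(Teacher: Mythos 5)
Your proposal is correct and follows essentially the same route as the paper's own proof: choose the smallest $k$ with $2^k\ge n$, enclose the patch in a $2^d$ block of order-$k$ super-semi-cubes (of which there is a $k$-independent number of types, by the self-similarity remark preceding the theorem), and count offsets $2^{kd}\le(2n)^d$ within one super-semi-cube. Your version merely spells out the containment estimate and the (block type, offset) parametrisation that the paper leaves implicit.
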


\begin{proof}
Let $k$ be the smallest natural number such that $2^k\ge n$.
Any cubic subpattern of linear size $n$ is now contained in
some $2^d$ block of super-semi-cubes of order $k$. There is
a fixed number $c$ of such blocks (independent of $k$).
Let $\boldsymbol{x}$ be the corner of the $n^d$ block with
the lowest coordinates. To count all distinct $n^d$ blocks,
it is enough to let $\boldsymbol{x}$ vary within one 
super-semi-cube of order $k$. So, the number of distinct 
$n^d$ subpatterns is at most $c\cdot2^{kd}\le c\cdot(2n)^d$.
\end{proof}

Denote by $P_d(n)$ the number of cubic $n$-patterns in the $d$-dimensional
paperfolding structure, by which we mean semi-cubes with side length $n$ 
(with interior). By Theorem \ref{thm: growth} we know that
$P_d(n)=O(n^d)$. In \cite{allouche92} Allouche shows that $P_1(n) =
4n$ for $n\geq7$, and computer enumerations indicate that the
following holds for $n\geq3$:
\[ 
P_2(n) = 12n^2 -4 -16\cdot 2^{2\lfloor \log_2 (n-1) \rfloor} 
         + 24n\cdot2^{\lfloor \log_2 (n-1) \rfloor}.
\]

Finally, having a primitive substitution generating the 
paperfolding structures allows to determine topological
invariants of its hull. Anderson and Putnam \cite{AP} have 
shown that a generating primitive substitution allows to construct 
the hull as an inverse limit of a sequence of finite cell complexes 
and cellular maps between them. This in turn allows to compute the 
\v{C}ech cohomology as the direct limit of the cohomologies of these 
approximant cell complexes. We have a computer program that implements
this computation for arbitrary block substitutions in dimensions
$1$ and $2$. Applying it to the paperfolding substitutions, we obtain
the following results:

\begin{theorem}
  The hull of the classical 1-dimensional paperfolding structures has
  \v{C}ech cohomology groups
  \[ \check{H}^0=\Z,\quad\check{H}^1=\Z[\textstyle{\frac12}]\oplus\Z. \]
  The hull of the 2-dimensional generalised paperfolding structures has
  \v{C}ech cohomology groups
  \[ \check{H}^0=\Z,\quad\check{H}^1=\Z[\textstyle{\frac12}]^2,\quad
     \check{H}^2=\Z[\textstyle{\frac14}]\oplus
                 \Z[\textstyle{\frac12}]^2\oplus\Z^3\oplus\Z_2.
  \]
\end{theorem}

\section{Concluding Remarks}

In our paperfolding structures, we have always folded the half-space
with negative coordinates onto the half-space with positive coordinates,
first in direction 1, then 2, and so on. Ben-Abraham et al. \cite{benabraham}
have chosen the same convention. This could be varied, of course,
but the generated structures would be different, in general. 
For instance, one could change the order of the folding in 
the different directions, or fold underneath instead of ontop
of the positive half-space in some directions. One could even
vary the type of $d$-fold according to a periodic or aperiodic
sequence, along the lines of \cite{GaeMal13}. We suspect that as 
long as the sequence of folds applied is periodic, a generating 
substitution may still exist, but the situation with an 
aperiodic sequence of folds will be considerably more complicated. 

\section{Acknowledgement}
This work was supported by the German Research Council (DFG), via CRC 701.
We thank Alexey Garber helpful discussions.

\end{document}